\theoremstyle{plain}
\newtheorem{thm}{Theorem}[section]
\newtheorem{theorem}[thm]{Theorem}
\newtheorem{lemma}[thm]{Lemma}
\newtheorem{corollary}[thm]{Corollary}
\newtheorem{proposition}[thm]{Proposition}
\newtheorem*{thm0}{Theorem}
\newtheorem*{thm36}{Theorem 3.6}
\theoremstyle{definition}
\newtheorem{question}[thm]{Question}
\newtheorem{remark}[thm]{Remark}
\newtheorem{definition}[thm]{Definition}
\newtheorem{example}[thm]{Example}
\numberwithin{equation}{thm}
\newcommand{\sH}{{\mathcal H}}
\newcommand{\sL}{{\mathcal L}}
\newcommand{\sO}{{\mathcal O}}
\newcommand{\sT}{{\mathcal T}}
\newcommand{\sU}{{\mathcal U}}
\newcommand{\sV}{{\mathcal V}}
\newcommand{\sW}{{\mathcal W}}
\newcommand{\C}{{\mathbb C}}
\renewcommand{\H}{{\mathbb H}}
\renewcommand{\L}{{\mathbb L}}
\newcommand{\Q}{{\mathbb Q}}
\newcommand{\R}{{\mathbb R}}
\newcommand{\T}{{\mathbb T}}
\newcommand{\U}{{\mathbb U}}
\newcommand{\V}{{\mathbb V}}
\newcommand{\W}{{\mathbb W}}
\newcommand{\Z}{{\mathbb Z}}
\newcommand{\rk}{{\rm rank}}
\newcommand{\SL}{{\rm SL}}
\title[]{a characterization of special subvarieties in orthogonal shimura varieties}
\author{Stefan M{\"u}ller-Stach}
\address{Universit\"{a}t Mainz, Fachbereich 17, Mathematik, 55099 Mainz, Germany}
\email{stach@uni-mainz.de, zuok@uni-mainz.de}
\author{Kang Zuo}
\dedicatory{F\"ur Eckart, in Freundschaft und Dankbarkeit}
\subjclass{14G35}
\keywords{Andr\'e-Oort conjecture, Shimura variety, local system, Higgs bundle}
\thanks{Supported by SFB/TRR 45 der Deutschen Forschungsgemeinschaft 2007-2011, Fudan University 2007}
\begin{document}

\begin{abstract}
Let $A_g$ be the moduli space of principally polarized abelian varieties of dimension $g$ with some level structure and 
$M^0 \subset A_g$ an orthogonal Shimura variety. We consider a smooth toroidal compactification $M$ of $M^0$ and 
a subvariety $Y \subset M$ intersecting the boundary of $M$ transversely.
Then we give necessary and sufficient conditions of Andr\'e-Oort type 
for $Y$ itself being the compactification of a special subvariety $Y^0 \subset M^0$.
\end{abstract}

\maketitle

\section*{Introduction}

In this paper we want to study a certain weak form of the \emph{Andr\'e-Oort conjecture} extending our previous work with Viehweg \cite{mvz08}.
In order to explain our results we first want to introduce the required notation.

\subsection*{Notation}
Let $A_g:=A_g^{[N]}$ denote a fine moduli scheme of principally
polarized abelian varieties of dimension $g$ with a level
$N$-structure, for some $N \geq 3$. We choose in addition a smooth toroidal 
compactification $\overline{A}_g$ as constructed by Mumford et al. \cite[chap. III]{amrt}, 
such that $S=\partial \overline A_g$ is a divisor with normal crossings. 
We denote by $f: X \to A_g$ the universal family of abelian varieties and by $\V=R^1f_*\Q$ the
local system attached to it. There is a polarized variation of Hodge structures (VHS) defined over $\Q$ 
with underlying local system $\V$ which we also denote by $\V$.
The assumption $N \ge 3$ implies that the monodromies of $\V$ around all components of $S$ are unipotent. 
We consider a smooth projective subvariety $Y \subset \overline A_g$ meeting $S$ transversely and define $Y^0:=Y \cap A_g$. 
Throughout this paper we denote subvarieties contained in the locally symmetric part $A_g$ of $\overline A_g$ with a superscript $0$.


Write $(GSp_{2g},H_g)$ for the pure Shimura datum defining $A_g=A_g^{[N]}$ with level structure given by the compact open subgroup $K(N)$ of $GSp_{2g}(\mathbb{A}_f)$. 
By special subvariety of $A_g$ we mean, as is defined in \cite{ky} and \cite[6.2]{moonen-survey}, a geometrically irreducible component of a Hecke translate of 
the image of some morphism $Sh_K(G,X)\rightarrow A_g=Sh_{K(N)}(GSp_{2g},H_g)$, which is defined by an inclusion of Shimura subdatum $(G,X)\subset(GSp_{2g},H_g)$ 
together with some compact open subgroup $K\subset G(\mathbb{A}_f)$ such that $K\subset K(N)$. More concretely, we abuse the notation $A_g=\Gamma(N)\backslash H_g^+$ 
for a fixed connected component of $Sh_{K(N)}(GSp_{2g},X)$, and we mainly work with subvarieties of $A_g$ that are of the form $\Gamma\backslash X^+$, 
where for some Shimura subdatum $(G,X)$ one has $X^+$ a connected component of $X$, and $\Gamma=G(\mathbb{Q})_+\cap\Gamma(N)$. Note that the center of 
$G(\mathbb{R})$ acts on $X$ trivially, and $X^+$ is homogeneous under $G^{ad}(\mathbb{R})^+$.

Typical cases of special subvarieties are given by moduli subschemes of $A_g$ that classify abelian varieties with PEL data. 
We refer the readers to \cite[Section 4, 5]{kottwitz} and \cite[Section 8, 8.14, 8.15, 8.17, etc.]{milne} for more details. Following the notations in \cite{milne}, 
the subdatum $(G,X)\subset(GSp_{2g},H_g)$ can be given as follows. Consider $B$ a simple $\mathbb{Q}$-algebra endowed with a positive anti-involution $*$, 
and $(V,\psi)$ a symplectic $(B,*)$-module. Let $G$ be the linear $\mathbb{Q}$-group of $B$-linear symplectic similitudes of $V$. 
The following moduli problem of tuple $(A,s,i,\eta_K)$ is representable: 

(i) $A$ is a complex abelian variety, with $\pm s$ a polarization of the Hodge structure $H_1(A,\mathbb{Q})$;

(ii) $i$ a homomorphism $B\rightarrow End(A)\otimes_\mathbb{Q}\mathbb{A}_f$, $\eta:V\otimes_\mathbb{Q}\mathbb{A}_f\simeq H^1(A,\mathbb{Q})\otimes_\mathbb{Q}\mathbb{A}_f$ 
a $B\otimes_\mathbb{Q}\mathbb{A}_f$-linear isomorphism sending $\psi$ to an $\mathbb{A}_f^\times$-multiple of $s$, and $\eta_K$ is a $K$-orbit of $\eta$, 
$K$ being some compact open subgroup of $G(\mathbb{A}_f)$, which is assumed to be sufficiently small so as to preserve a level $N$ structure on $H_1(A,\mathbb{Q})$;

(iii) there exists a $B$-linear isomorphism $a:H_1(A,\mathbb{Q})\rightarrow V$ sending $s$ to a $\mathbb{Q}^\times$-multiple of $\psi$. 

The moduli problem is represented by a Shimura variety $Sh_K(G,X)$, with its canonical map into $A_g$. 

In order to obtain special subvarieties of unitary type, one may take $B$ to be a central simple $E$-algebra, 
with $E$ some CM extension of a totally real number field $F$, such that the restriction of $*$ to $E$ gives the 
complex conjugation fixing $F$. In particular, if one takes $V=B$ as a $\mathbb{Q}$-vector space, with $\psi$ given by some some 
element $q\in B^\times$ such that $\psi(x,y)=tr_{B/\mathbb{Q}}(xqy^*)$ (e.g. $q$ is in $E$ such that $c(q)=-q$). 
Then $G^{der}$ is a $\mathbb{Q}$-form of $Res_{F/Q}SL_m$, with $m=\sqrt{\dim_EB}$, and $G(\mathbb{R})$ is a product of unitary groups, 
whose signatures depend on the signatures of $q$ along different embeddings $F\rightarrow \mathbb{R}$. In order to make $X^+$ an Hermitian 
symmetric space associated to $SU(m-1,1)$, one should choose the data in such a way that $G(\mathbb{R})$ is the product of a unitary group of 
signature $(m-1,1)$ with other unitary groups of signature $(m,0)$. 

In \cite[10.2]{hoermann} a Shimura subdatum $(GSpin(V),X)\subset(GSp(C^+(V),H(C^+(V)))$ of orthogonal type is constructed. 
Note that the special subvarieties obtained from this subdatum are of the form $\Gamma\backslash X^+$, with $X^+$ the Hermitian symmetric 
domain associated to $SO(n-2,2)_\mathbb{R}\simeq(GSpin(V))^{ad}_\mathbb{R}$. This reformulates the results in \cite[Section 4, 5]{D72}, 
which is inspired by \cite{KS67}, where Kuga and Satake constructed a morphism from the moduli variety of K3 surfaces to the Siegel 
moduli variety of abelian varieties (for $n=21$). Deligne's construction in \cite{D72} exactly fits into the formalism of Shimura data 
axiomized later in \cite{deligne-corvallis}, except that it follows the traditional convention of signs for Hodge types.

On the special subvariety defined by $(GSpin(V),X)\subset(GSp(C^+(V)),H(C^+(V)))$ there exists a polarized $\mathbb{Q}$-VHS of type 
$\{(-2,0),(-1,-1),(0,-2)\}$, with Hodge numbers $h^{-2,0}=h^{0,-2}=1,h^{-1,-1}=n-2$. Recall that in \cite[4,5]{D72}, 
from the Shimura datum $(SO(V)\simeq SO(n-2,2),D)$ and the natural representation of $\rho:SO(V)\rightarrow GL(V)$, 
one gets, for any $x\in D$, a polarized $\mathbb{Q}$-HS $(V,\rho\circ x)$ of type $\{(-1,1),(0,0),(1,-1)\}$ with 
Hodge numbers $h^{-1,1}=h^{1,-1}=1,h^{0,0}=n-2$. Note that in $\rho\circ x:\mathbb{S}\rightarrow GL(V_\mathbb{R})$, 
the real multiplicative group $\mathbb{G}_m\subset\mathbb{S}$ acts trivially. Now lift $\rho$ to the natural representation 
$\rho':GSpin(V)\rightarrow GL(V)$. Since the center of $GSpin(V)$ conincides with that of $GL(V)$ and that $GSpin(V)$ is the 
central extension of $SO(V)$ by $\mathbb{G}_m$, we deduce that for any $x\in X$ with respect to the Shimura datum 
$(GSpin(V),X)$, $(V,\rho'\circ x)$ is a polarized $\mathbb{Q}$-HS with types and Hodge numbers prescribed as in the 
beginning of the paragraph, namely shifted from the ones in \cite{D72} by $(-1,-1)$. Consequently, from $\rho'$ one obtains 
a polarized $\mathbb{Q}$-VHS on the special subvariety defined by $(GSpin(V),X)$, with the same Hodge numbers as $(V,\rho'\circ x)$, $\forall x\in X$.

Interested readers may also consider more general cases of indefinite quadratic spaces over a totally real number field, as studied in \cite{kudla}.

Let us explain some notation used in the statement of the following two theorems even if more details can be found in the subsequent sections. 
In this paper, the symbol $S_Z$ always denotes the divisor at infinity for any compactified 
subvariety $Z \subset \overline A_g$, i.e., the intersection $S_Z=Z \cap S$ where $S=\partial \overline A_g$ is the boundary of $\overline A_g$, 
whereas the ''open'' part $Z \setminus S_Z$ is denoted by $Z^0$.
In all considerations and proofs in this paper we will always make the following general assumption: \emph{All divisors $S_Z=Z \cap S$ 
which arise from intersections of images of subvarieties $Z \subset \overline A_g$ with $S=\partial \overline A_g$ are 
divisors with normal crossings, i.e., the intersections are transverse.} In particular we can speak about logarithmic differential forms on $Z$ 
with poles in  $S_Z$. 
Many of our results may hold with weaker assumptions. However, we want to work out the principles here, and do not strive for maximal generality.

Let $Y \subset \overline A_g$ be a smooth subvariety and $W \subset Y$ a subvariety of $Y$ satisfying the above transversality as assumptions. We denote by 
$$
N_{W/Y}=T_Y(-\log S_Y)/T_W(-\log S_W)
$$ 
the logarithmic normal bundle of $W$ in $Y$. Let $\rho$ be the Picard number of $Y$ and $\delta$ 
the number of crossings of $S \cap Y$. Let $i: M^0 \hookrightarrow  A_g$ be a special subvariety for the orthogonal group 
$SO(2,n),$ where $i$ is the so-called Kuga-Satake embedding  \cite{KS67}, and such that its toroidal compactification $M$ also is embedded into $\overline A_g$. 
On $M^0$ there is a natural polarized VHS $\V'$ of weight two and rank $n+2$ coming from the standard representation of $SO(2,n)$ in $GL(n+2)$.  There is a natural 
proper inclusion $\V'\subset i^*\V^{\otimes 2}$ as a polarized sub VHS which is explained for example in \cite{D72}. The local monodromy of $\V'$ 
around $S_M$ is assumed to be unipotent as well. The (canonical) Deligne extension $\overline {\mathcal V}'$ of ${\mathcal V}':=\V' \otimes {\mathcal O}_{M^0}$ to $M$
carries a natural Hodge filtration, i.e., a descending filtration
$$
{\mathcal V}'=F^0 \supset F^1 \supset \cdots 
$$
by subvector bundles and a logarithmic Gauss-Manin connection $\overline{\nabla}:\overline{{\mathcal V}'} \to \overline{{\mathcal V}'} \otimes \Omega^1_M(\log S_M)$ 
extending $\nabla:{\mathcal V}' \to {\mathcal V}' \otimes \Omega^1_M$. 
The graded object associated to this filtration together with the graded logarithmic Gauss-Manin connection $\vartheta$ 
is the corresponding logarithmic Higgs bundle $(E=E^{2,0} \oplus E^{1,1} \oplus E^{0,2},\vartheta)$ 
under the Simpson correspondence \cite[Main Theorem]{simpson}. Note that only for VHS this correspondence is so simple.
Griffiths' transversality for ${\mathcal V}'$ translates into the fact that $\vartheta$ is given by maps

$$
\vartheta^{2,0}:E^{2,0}   \longrightarrow E^{1,1} \otimes \Omega^1_M(\log S_M), \quad 
\vartheta^{1,1}:E^{1,1} \stackrel{\simeq}{\longrightarrow} E^{0,2} \otimes \Omega^1_M(\log S_M)
$$
 
and $\vartheta^{0,2}=0$. Integrability of $\nabla$ implies $\vartheta \wedge \vartheta=0$.

The \emph{Griffiths-Yukawa coupling} $\vartheta^{(2)}_Z$ on a smooth subvariety 
$i:Z\hookrightarrow  M$ intersecting $S_M$ transversely in $S_Z$ is defined as the composition 
$$
\vartheta^{(2)}_Z:=\vartheta^{1,1} \circ \vartheta^{2,0}:i^* E^{2,0} \longrightarrow 
i^*E^{1,1} \otimes \Omega^1_Z(\log S_Z) \longrightarrow i^*E^{0,2} \otimes \Omega^1_Z(\log S_Z)^{\otimes 2}.
$$
Note that $\vartheta^{(2)}$ lands in $S^2 \Omega^1_Z(\log S_Z)$ as the image in 
$i^* E^{0,2} \otimes \Omega^2_Z(\log S_Z)$ is zero by the condition $\vartheta \wedge \vartheta=0$. 

In the following statements, the degree of a vector bundle $F$ with respect to a line bundle $L$ on a smooth projective variety $Y$ of dimension $d$ 
is defined as $\deg_L(F):=c_1(L)^{d-1}c_1(F)$. The slope is defined as $\mu_L(F):=\deg_L(F)/{\rm rank}(F)$.  

\subsection*{Previous and new results}

The \emph{Andr\'e-Oort conjecture} asserts that an irreducible subvariety $Y^0 \subset A_g$ is special
if and only if it contains a dense set of CM points. Klingler and Yafaev \cite{ky} have announced a proof of it using results of Ullmo and Yafaev 
and had to assume a generalized Riemann hypothesis.

Our methods in this paper are not very sensitive to CM points versus non-CM points. However, the Andr\'e-Oort conjecture would also 
imply that the closure of any union of positive dimensional special subvarieties is again special.

Our main goal therefore is to prove the following consequence of the Andr\'e-Oort conjecture: Let 
$Y^0 \subset A_g$ be a subvariety containing \emph{sufficiently many} special subvarieties of dimension $\ge 1$. Then $Y^0$ is special.

The notion ''sufficiently many'' can be expressed for example in the following way. 
In \cite[Thm. 4.4]{mvz08} we used special divisors $W_i^0 \subset Y^0$ satisfying \emph{Hirzebruch-H\"ofer proportionality} (HHP) 
with $i \in I$, a finite index set. Condition (HHP) is an equality condition arising from 
an inequality which in turn has its origin in natural stability conditions for Higgs bundles. We refer to \cite{mvz08}
for the history of this condition. 

In \cite{mvz08} we then showed that $Y^0$ is special if $\sharp I$ exceeds some effective bound: 

\begin{thm0} [Thm. 4.4 in \cite{mvz08}] 
Let $Y \subset M \subset \overline A_g$ be a subvariety of $\overline A_g$ contained in a toroidal compactification $M$ of a 
Shimura subvariety $M^0 \subset A_g$ of type $SO(2,d)$. We assume that $Y$ and $M$ intersect the boundary $S$ of $A_g$ transversely, and 
require that $\Omega^1_Y(\log S_Y)$ is nef and $\omega_Y(S_Y)$ is ample with respect to $Y^0$.  
Assume $\dim(Y) \ge 2$ and $W_i \subset Y$ ($i \in I=$ finite set) are pairwise distinct divisors such that $W_i^0 \subset M^0$ is special.
\begin{enumerate}
\item[(i)] If all $W_i^0$ are of orthogonal type, if all $W_i$ satisfy condition 
$$
(HHP): \quad \frac{\deg_{\omega_{W_i}(S_{W_i})}(N_{W_i/Y})}{\rk N_{W_i/Y}}=
\frac{\deg_{\omega_{W_i}(S_{W_i})}(T_{W_i}(-\log S_{W_i}))}{\rk T_{W_i}(-\log S_{W_i})},
$$
and if $\# I \geq \varsigma(Y):=(\rho+\delta)^2+\rho+\delta+1$, then $Y^0\subset M^0$ is a special subvariety of orthogonal type.
\item[(ii)] Assume that the Griffiths-Yukawa coupling vanishes on $Y$. If the $W_i^0$ are special subvarieties of unitary type, if condition (HHP) holds
$$
(HHP): \quad \frac{\deg_{\omega_{W_i}(S_{W_i})}(N_{W_i/Y})}{\rk N_{W_i/Y}}
= \frac{\deg_{\omega_{W_i}(S_{W_i})}(T_{W_i}(-\log S_{W_i}))}{d+1}, 
$$
and if $\# I \geq \varsigma(Y)$, then $Y^0\subset M^0$ is a special subvariety of unitary type.
\item[(iii)]  Let $Y$ be a surface and $I=\{1,2\}$. Assume that
$$
\sigma_1(W_1) \cap \sigma_2(W_2) \neq \emptyset
$$
and $\deg N_{W_i/Y}=0$ for $i=1,2$. Then $Y^0$ is the product of two Shimura curves.
\end{enumerate} 
\end{thm0}

In the following main result in this paper we remove the divisor hypothesis and obtain necessary and sufficient conditions 
supporting the Andr\'e-Oort conjecture. 

We need some additional notation to explain the theorem: We say that $Y$ can be covered by a smoothing of a cycle 
$\sum_i a_i C_i$ of compactified special curves $C_i^0 \subset Y^0$ satisfying (HHP), if there are finitely many embedded
special curves $C_i^0 \subset Y^0$ satisfying (HHP) such that their compactifications $C_i  \subset Y$ admit a linear combination $\sum_i a_i C_i$
with integer coefficients which can be deformed as embedded cuves in $Y$ in a family, such that the general deformation is smooth. 
Condition (HHP) in this case is given by equality in the following inequality: 

\begin{equation*} \tag{HHP} 
\deg N_{C/Y} \leq \frac{\rk (N^1_{C/Y})+\rk (N^0_{C/Y})}{2} \cdot \deg T_C(-\log S_C).
\end{equation*}

Here $N^\bullet_{C/Y}$ is the Harder-Narasimhan filtration on the logarithmic normal bundle $N_{C/A_g}$ intersected with $N_{C/Y}$.
We also fix $C_1$ and a base point $y_0 \in C_1$. With the notation for the Higgs bundle $E$ on $M$ restricted to $Y$ we then define the following 
vector spaces: $W_{y_0\in Y}$  is the subspace of  vectors in $E^{1,1}_{y_0}$ vanishing unter $\vartheta$ at the base point $y_0$
and $W_{y_0\in Y,\R} \subset W_{y_0\in Y}$ the real subspace of real vectors in $W_{y_0\in Y}$.

\begin{thm36}  Let $Y \subset M \subset \overline A_g$ be a subvariety of $\overline A_g$ contained in a toroidal compactification $M$ of a 
Shimura subvariety $M^0 \subset A_g$ of type $SO(2,n)$. We assume that $Y$ and $M$ intersect the boundary $S$ of $A_g$ transversely, and 
that $Y$ can be covered by a smoothing of a cycle $\sum_i a_i C_i$ of compactified special curves $C_i^0 \subset Y^0$ satisfying (HHP). Then: \\ 
(a) If $W_{y_0\in Y}=W_{y_0\in Y,\R}\otimes \C$ for some $y_0\in C_1$ then $Y^0\subset M^0$ is a special subvariety of orthogonal type.\\
(b) If the Griffiths-Yukawa couplings along all $C_i$ do not vanish then $Y^0\subset M^0$ is a special subvariety of orthogonal type. \\
(c) If the Griffiths-Yukawa coupling along $Y$ vanishes then $Y^0\subset M^0$ is a special subvariety of unitary type, i.e., a ball quotient. 
\end{thm36}

In the assertions (a) and (b) of this theorem one may replace the assumption on the smoothing of the cycle $\sum_i a_i C_i$ of special curves 
$C_i \subset Y$ satisfying (HHP) by the following: Assume that there is a connected union $C_1 \cup \cdots \cup C_l$ of compactified special 
curves satisfying (HHP) and such that the image $\pi_1(\bigcup C_i^0,*)$ under the natural map in $ \pi_1(Y^0,*)$ is \emph{big} in the sense that
the image of $\pi_1(\bigcup C_i^0,*)$ under the monodromy representation  restricted to $Y^0$
$$
\pi_1(\bigcup C_i^0,*) \longrightarrow \pi_1(Y^0,*)\stackrel{\rho}{\rightarrow}SO(2,n)
$$ 
is Zariski dense in the \emph{algebraic monodromy group} $H(Y^0)$, i.e., the $\Q$-algebraic closure of the monodromy representation  $\rho.$ 
We note that this is a subgroup of Hermitian type in ${\rm SO}(2,n)$. This is nicely explained, for example, in \cite[Sect. 1.3]{moonen}.
Hermitian subgroups of ${\rm SO}(2,n)$ like $H(Y^0)$ can be classified. Besides the obvious 
orthogonal and unitary subgroups which are $\Q$-simple there are ${\rm SL}_2 \times {\rm SL}_2$ and quaternionic versions \cite[Thm. 5.2.3.]{sz}.
In the non-$\Q$-simple cases we therefore have $\dim(Y)=2$ and $Y^0$ is uniformized by a product $\H \times \H$ of upper half planes.

\subsection*{Acknowledgements}
This work naturally continues the results in \cite{mvz08}. Together with Eckart Viehweg we have thought about \emph{thickenings
of Higgs bundles} during a stay at Fudan University in the summer of 2007. Thickenings play an essential role in this paper which 
therefore should be considered as joint work with Eckart. \\
We thank Ke Chen for explanations on Shimura data.
We also thank Michael Harris for enlightening discussions about special cycles and the referee for several helpful remarks.

\section{Basic Setup}

In this section we will use the \emph{Simpson correspondence} for curves \cite[Main Thm.]{simpson}. It is a natural equivalence between 
the category of direct sums of stable filtered regular Higgs bundles of degree zero and the category of direct sums of 
stable filtered local systems of degree zero. We will need this correspondence only in the case when the local
system $\V$ has unipotent local monodromies. In that case the filtration on the Higgs bundle is trivial and $\deg(\V)$ is automatically zero.
We refer the reader to \cite[sect. 1]{vz04} for additional results and explanations on Higgs bundles on curves building up on Simpson's work. 

Consider a non-singular projective curve $C$ and a non-constant morphism
$$
\varphi: C \to Y \subset \overline A_g,
$$
where $Y \subset \overline A_g$ is a smooth projective subvariety as in the introduction. We set 
$C^0:=\varphi^{-1}(Y^0)\not=\emptyset$, where $Y^0=Y \cap A_g$ denotes the ''open'' part.

In the following we consider the situation where $C^0$ is a Shimura curve and $S_C:=C \setminus C^0$ is the set of cusps.
We also denote by $S_Y$ the intersection of $Y$ with $S=\partial \overline A_g$ and we assume that the intersection is transversal
such that $S_Y$ is a divisor with normal crossings. We assume that the restriction $\varphi: C^0 \to Y^0 \subset A_g$ is an \'etale 
morphism of Shimura varieties. Let $f: X\to C^0$ denote also the family obtained by pullback via $\varphi.$

The main goal of this paper is to find new criteria when $Y^0$ itself is a special subvariety in $A_g$, for example if 
''sufficently many'' such curves $C$ with certain properties map to $Y$. 
In such a situation, by \cite[Prop. 1.4]{vz04} and \cite[Thm. 0.9]{mvz06}, after replacing $C^0$ by an 
\'etale cover, the local system $\V_{C^0}:=\varphi^*R^1f_*\C_X$ admits a decomposition
$$
\V_{C^0}=\L\otimes \T \oplus \U
$$
as a  polarized  \emph{complex variation} in the sense of Deligne, i.e., a  polarized $\C$-VHS in the sense of Simpson \cite{simpson} on $C^0$. 
Note that this \'etale cover of $C^0$ is necessary, however, all our proofs below 
are insensitive to such \'etale base change even if we apply this construction to a finite number of curves simultaneously later.
Here $\L$ is of weight one and rank two with the  logarithmic Higgs bundle
$$
\left( \sL\oplus \sL^{-1},\tau:\sL\simeq \sL^{-1}\otimes \Omega^1_C(\log S_C) \right),$$

$\T$  is concentrated in bidegree $(0,0)$ and selfdual,
whereas $\U$ is of weight one and decomposes in two local subsystems
$$
\U=\U^{1,0}\oplus \U^{0,1},\quad \U^{1,0}=\U^{0,1\vee}.
$$
Note that the local systems $\T$, $\U^{1,0}$ and $\U^{0,1}$ are unitary and the local monodromies around $S_C$ are unipotent, 
hence the local monodromies are in fact trivial. Hence $\T$, $\U^{1,0}$ and $\U^{0,1}$ can be extended as local systems to $C$.
Writing $(\sT=\T\otimes\sO_C, 0)$ and $(\sU=\U^{1,0}\otimes\sO_C, 0)\oplus (\sU^\vee=\U^{0,1}\otimes\sO_C, 0)$ for the corresponding Higgs
bundles,   then the Higgs bundle corresponding to $\V_{C^0}$ decomposes in the form 
\begin{equation*} \tag{1.1} \label{1.1}
(E^{1,0}\oplus E^{0,1},\theta)=(\sL\oplus \sL^{-1},\tau)\otimes(\sT,0)\oplus (\sU,0)\oplus(\sU^\vee,0). 
\end{equation*}

The line bundle $\sL$ has positive degree, since $\sL$ is the pullback of some positive power of the automorphic line bundle 
on $\overline A_g$ via $\varphi: C\to \overline A_g$. Since $\varphi: C^0\to A_g$ is not constant and the automorphic line bundle is positive on $\overline A_g$ 
it follows that $\deg(\sL)$ is positive. Via the isomorphism $\tau$ we identify $T_C(-\log S_C)=\sL^{-2}$.

The Hodge metric $\V_{C^0}$ comes from the tensor product of the Hodge metrics on $\L,\, \T,\, \U^{1,0}$ and $\U^{0,1}$, 
which, by \cite[Sect. 4]{simpson1} and \cite{simpson}, coincide with the Hermitian-Yang-Mills metrics on the corresponding logarithmic Higgs bundles.  
The Hodge metrics on $E^{1,0}$ and $E^{0,1}$ are tensor products of the Hodge metrics on $\sL^{\pm},\, \sT,\, \sU^{1,0}$ and $\sU^{0,1}$.

In general, a Hodge bundle with Hodge metric of any Schur functor $S(\V_{C^0})$ is obtained in a similar way from the Hodge metrics on 
$\sL^{\pm},\, \sT,\, \sU^{1,0}$ and $\sU^{0,1}$.\\[.1cm]

Let $f: X\to A_g$ denote the universal family, $\V:=R^1f_*\C_X$  and   $E:=E^{1,0} \oplus  E^{0,1}$ 
the logarithmic Higgs bundle corresponding to Deligne's canonical extension  of $\V \otimes \sO_{A_g}$ 
on the toroidal compactification $\overline A_g \supset A_g$. It comes with the logarithmic Higgs map 
$$
\theta: E^{1,0}\to  E^{0,1}\otimes\Omega^1_{\overline A_g}(\log S).
$$ 
Since $\V$ is a polarized VHS, there is a natural isomorphism ${\rm End}(E)\stackrel{\simeq}{\rightarrow}E^{\otimes 2}$. 
Then it is well-known \cite[p. 339]{faltings} that the composition
$$ 
T_{\overline A_g}(-\log S)\stackrel{\theta}{\rightarrow}  {\rm End}(E)\stackrel{\simeq}{\rightarrow}E^{\otimes 2}\to S^2(E^{0,1}),
$$
identifies $T_{\overline A_g}(-\log S)$ with $S^2(E^{0,1})$. 
The derivatives of the maps
$$
C\stackrel{\varphi}{\rightarrow}Y\stackrel{i}{\rightarrow}\overline A_g
$$ 
induce the following commutative diagramm

\begin{equation*} \tag{1.2} \label{1.2}
\xymatrix{0 \rightarrow   T_C(-\log S_C) \ar@{=}[d] \ar[r]^{d \varphi}  &  \varphi^*T_Y(-\log S_Y)  \ar@{^{(}->}[d]^{d i} 
\ar[r]_{}^{} & N_{C/Y} \ar@{^{(}->}[d] \rightarrow 0 \\
 0  \rightarrow   T_C(-\log S_C) \ar[r]^{d (i\circ \varphi)} &(i\circ\varphi)^* T_{\overline A_g}(-\log S_Y) \ar[r]^{}& N_{C/\overline A_g} \rightarrow 0 },
\end{equation*}

where $N_{C/Y}$ is the (logarithmic) normal bundle of $\varphi: C\to Y$ and $N_{C/\overline A_g}$ is the (logarithmic) normal bundle
of $i\circ\varphi: C\to \overline A_g.$

On the curve $C$ one has 
$$ (i\circ\varphi)^* T_{\overline A_g}(-\log S)=(i\circ\varphi)^*S^2(E^{0,1})
=\left( \sL^{-2}\otimes S^2(\sT) \right) \oplus \left(\sL^{-1}\otimes \sT\otimes\sU^\vee\right) \oplus S^2(\sU^\vee),
$$
where the decomposition on the right side is induced by \eqref{1.1} and is orthogonal with respect to the Hodge metric.\\

All three summands are polystable by the main theorem in \cite{simpson}, but, as $\deg(\sL)>0$,  with different slopes
$$
-2\deg \sL,\; -\deg \sL \textrm{ and } 0.  $$

Consider the inclusion 
$$
T_C(-\log S_C)\stackrel{d\varphi}{\rightarrow}\varphi^*T_Y(-\log S_Y)\stackrel{di}{\rightarrow}(i\circ\varphi)^*T_{\overline A_g}(-\log S)=
$$
$$
= \left( \sL^{-2}\otimes S^2(\sT) \right) \oplus \left(\sL^{-1}\otimes \sT\otimes\sU^\vee\right) \oplus S^2(\sU^\vee),            
$$
As the derivative  $d\varphi$ can be identified with the Higgs map $\theta$ and  $\theta$ on $C$ preserves the direct sum decomposition in \eqref{1.1}
and vanishes on the second summand,  the image of $T_C(-\log S_C)$ is contained in $\sL^{-2}\otimes S^2(\sT)$.\\[.1cm]
For the convenience of the reader we recall the following definition. 

\begin{definition} A holomorphic subbundle $i:F\hookrightarrow E$ of a Hodge bundle $E$ of a polarized complex variation of Hodge structure 
is called a direct summand of $E$ and orthogonal with respect to the Hodge metric if there 
exists an isomorphism $E\simeq F\oplus G$ between holomorphic vector bundles, such that the first summand defines the inclusion $i$  and 
the decomposition is orthogonal with respect to the Hodge metric. 
\end{definition}

\begin{lemma} \label{1.0}  The line subbundle
$$ T_C(-\log S_C)\subset\sL^{-2}\otimes S^2(\sT)$$
induces a holomorphic decomposition of $\sL^{-2}\otimes S^2(\sT)$, which is  orthogonal with respect the Hodge metric, i.e.,
there exists a holomorphic subbundle
$$
T_C(-\log S_C)^\bot\subset \sL^{-2}\otimes S^2(\sT)
$$ 
such that
$$ \sL^{-2}\otimes S^2(\sT)=T_C(-\log S_C)\oplus T_C(-\log S_C)^\bot,$$
and such that this decomposition is orthogonal with respect to the Hodge metric.
\end{lemma}

\begin{proof} We note first that the Hodge metric on $\sL^{-2}\otimes S^2(\sT)$ comes from  the corresponding tensor product of the  Hodge metrics
on the polarized $\mathbb C-$VHS $\mathbb L$, $\mathbb T$ and $\mathbb U$. The Hodge metric on the corresponding Higgs bundle $\sT$ is the Hermitian-Yang-Mills metric
by \cite{simpson}. Consider the subbundle
$$
T_C(-\log S_C)\subset\sL^{-2}\otimes S^2(\sT).
$$
Since $$\tau: \sL\simeq \sL^{-1}\otimes\Omega^1_C(\log S_C),$$
we have $T_C(-\log S_C)=\sL^{-2}$,  hence 
$$
\sL^{-\otimes 2}\subset \sL^{-2}\otimes S^2(\sT).
$$
Dividing both sides by the factor $\sL^{-\otimes 2}$ we get
$$
\sO_C\subset S^2(\sT).
$$
Note that the Higgs bundle $S^2(\sT)$ has zero Higgs field.  Hence $\sO_C$ is a Higgs sub bundle of $S^2(\sT)$ with slope equality $\mu(\sO_C)=0=\mu(S^2(\sT))$.
Applying Simpson's Higgs polystability, there exists a holomorphic decomposition 
$$  
S^2(\sT)=\sO_C\oplus \sO_C^\bot,
$$
which is orthogonal w.r.t. the Hermitian-Yang-Mills metric on $S^2(\sT).$
Tensoring with $\sL^{-\otimes 2}$ on both sides of the above decomposition, we obtain the desired decomposition as claimed. 
\end{proof}

The decomposition in Lemma~\ref{1.0} induces the following decompostion
$$
(i\circ\varphi)^* T_{\overline A_g}(-\log S_Y)=T_C(-\log S_C)\oplus \left( T_C(-\log S_C)^\bot\oplus (\sL^{-1}\otimes \sT\otimes\sU^\vee) \oplus S^2(\sU^\vee)\right).
$$
Let $p$ denote the projection to the first summand, then  the composition
$$ 
T_C(-\log S_C) \stackrel{ d  \varphi  }{\rightarrow}\varphi^*T_Y(-\log S_Y)\stackrel{di}{\rightarrow}
(i\circ \varphi)^* T_{\overline A_g}(-\log S)   \stackrel{p}{\rightarrow}T_C(-\log S_C)
$$
is the identity. This shows that both horizontal short exact sequences in diagram (\ref{1.2})  split in the form

\begin{equation*} \tag{1.3} \label{1.3}
\xymatrix{ \varphi^*T_Y(-\log S_Y)  \ar@{^{(}->}[d]^{d i}  \ar@{=}[r] & T_C(-\log S_C) \ar@{=}[d]    
\oplus &{ \ar@{^{(}->} (62.5,-17.5)*+{N_{C/\overline A_g}}; (62.5,0)*+{  N_{C/Y}}; } \\
 (i\circ\varphi)^* T_{\overline A_g}(-\log S_Y)\ar@{=}[r]  & T_C(-\log S_C)    \oplus &  }
\end{equation*}

such that   
$$
N_{C/\overline A_g}=  T_C(-\log S_C)^\bot\oplus (\sL^{-1}\otimes \sT\otimes\sU^\vee) \oplus S^2(\sU^\vee).
$$

\begin{remark} The holomorphic and orthogonal splitting 
$$
T_C(-\log S)\stackrel{d(i\circ\varphi)}{\rightarrow} (i\circ \varphi)^*T_{\overline A_g}(-\log S)
$$
for a special curve $i\circ \varphi: C^0\to A_g$ in (\ref{1.3}) holds also true in general if $C^0$ is replaced by any special subvariety,
see the proof for ii) in Proposition \ref{prop 1.5}. In diagram (\ref{1.3}) we obtain an explicit description 
of the logarithmic normal bundle $N_{C/\overline A_g}$. 
\end{remark}
 
We shall now describe the Harder-Narasimhan filtration  on $N_{C/\overline A_g}$.
Let 
$$
N_{C/\overline A_g}^0: = T_C(-\log S_C)^\bot,
$$
$$
N_{C/\overline A_g}^1:=   T_C(-\log S_C)^\bot \oplus \sL^{-1}\otimes \sT\otimes\sU^\vee$$
and 
$$
N_{C/\overline A_g}^2:=N_{C/\overline A_g}.
$$
Then  the filtration 
$$
0 \subset  N_{C/\overline A_g}^0\subset N_{C/\overline A_g}^1\subset N_{C/\overline A_g}^2=N_{C/\overline A_g}
$$
is the Harder-Narasimhan filtration  on $N_{C/\overline A_g}$. In our situation, the graded summands 
$$
N_{C/\overline A_g}^{i}/N_{C/\overline A_g}^{i-1}, \text{ for } 0\leq i\leq 2
$$  
are polystable vector bundles of slopes $\deg T_C(-\log S_C)$, $\frac{1}{2} \deg T_C(-\log S_C)$ , and $0$. One has
$$
N_{C/\overline A_g}^1= N^0_{C/\overline A_g}\oplus N^1_{C/\overline A_g}/N^0_{C/\overline A_g},
$$
$$
N^2_{C/\overline A_g}=N^1_{C/\overline A_g}\oplus N^2_{C/\overline A_g}/N^1_{C/\overline A_g}.
$$
Taking the induced filtration on $N_{C/Y} \subset N_{C/\overline A_g}$ obtained by intersection with $N_{C/\overline A_g}^i$
$$ 
0 \subset N_{C/Y}^0 \subset  N_{C/Y}^1\subset N_{C/Y}^2= N_{C/Y},
$$
one finds subbundles
$$ 
N_{C/Y}^{i+1}/N_{C/Y}^i\subset N_{C/\overline A_g}^{i+1}/N_{C/\overline A_g}^i.
$$
We arrive at the following definition: 

\begin{definition}
$\varphi: C\to Y$ satisfies \emph{relative Hirzebruch-H{\"o}fer proportionality} (HHP) if the slope inequalities
$$
\mu(N_{C/Y}^{i+1}/N_{C/Y}^i)\leq\mu (N_{C/\overline A_g}^{i+1}/N_{C/\overline A_g}^i),\quad i=0,1,2
$$
are equalities. One has 
\begin{align*}
\mu (N_{C/\overline A_g}^{2}/N_{C/\overline A_g}^1)    & = \mu(S^2(\sU^\vee))=0, \cr 
\mu (N_{C/\overline A_g}^{1}/N_{C/\overline A_g}^0)    & = \mu(\sL^{-1}\otimes \sT\otimes\sU^\vee)=\frac{1}{2} \deg T_C(-\log S_C), \cr 
\mu (N_{C/\overline A_g}^{0})                          & = \mu(T_C(-\log S_C)^\bot)=\deg T_C(-\log S_C).
\end{align*}
Hence, we obtain a set of inequalities 
\begin{eqnarray*}
\mu(N_{C/Y}^{2}/N_{C/Y}^1) & \le & 0, \\
\mu(N_{C/Y}^{1}/N_{C/Y}^0) & \le & \frac{1}{2} \deg T_C(-\log S_C), \\  
\mu(N_{C/Y}^{0})           & \le & \deg T_C(-\log S_C). 
\end{eqnarray*}
Using $\mu=\frac{\deg}{\rm rank}$ and adding all three inequalities we obtain a single inequality
\begin{equation*} \tag{1.4} \label{1.4}
\deg N_{C/Y} \leq \frac{\rk (N^1_{C/Y})+\rk (N^0_{C/Y})}{2} \cdot \deg T_C(-\log S_C).
\end{equation*}
It satisfies equality if and only if (HHP) holds.
\end{definition}

These conditions are called (HHP) since Hirzebruch \cite{hi73}, in part with H\"ofer \cite{bhh87}, has studied embedded curves on ball quotients
and Hilbert modular surfaces and studied proportionality inequalities involving intersection numbers that attain equality if and only if the curve is
the compactification of a Shimura curve. Hirzebruch's inequalities together with our proof of them can also be found in \cite[Thm. 0.1]{mvz08}.

\begin{proposition} \label{prop 1.5}  {~} \\
(i) If $\varphi: C\to Y$ satisfies (HHP), then $\varphi^*T_Y(-\log S_Y)$
is a direct summand of an orthogonal decomposition of $\varphi^*T_{\overline A_g}(-\log S)$ with respect to the Hodge metric.\\
(ii) If $Y^0\subset A_g$ is a special subvariety, then $\varphi^*T_Y(-\log S_Y)$
is a direct summand of an orthogonal decomposition of $\varphi^*T_{\overline A_g}(-\log S)$ with respect to the Hodge metric and
$\varphi: C\to Y$ satisfies (HHP).
\end{proposition}

\begin{proof}  (i) Assuming (HHP), the slope of the sub bundle  
$$
N_{C/Y}^{i+1}/N_{C/Y}^i\subset N_{C/\overline A_g}^{i+1}/N_{C/\overline A_g}^i
$$ 
is equal to the slope of $N_{C/\overline A_g}^{i+1}/N_{C/\overline A_g}^i$. Since $N_{C/\overline A_g}^{i+1}/N_{C/\overline A_g}^i $
is polystable, $N_{C/Y}^{i+1}/N_{C/Y}^i $ is a direct summand of an orthogonal  decomposition of  $N_{C/\overline A_g}^{i+1}/N_{C/\overline A_g}^i $
w.r.t the  Hermitian-Yang-Mills metric, which is the induced Hodge metric on $N_{C/\overline A_g}^{i+1}/N_{C/\overline A_g}^i$. 

{\bf Claim:} The sub bundle $N_{C/Y}^i\subset N_{C/\overline A_g}^i,\quad 0\leq i\leq 2$  is a direct summand and orthogonal.
 
{\sl Proof of the claim.}  For $i=0.$  Since 
$$
N_{C/Y}^{0}/N_{C/Y}^{-1}=N_{C/Y}^{0},\quad N_{C/\overline A_g}^{0}/N_{C/\overline A_g}^{-1}=N_{C/\overline A_g}^{0},
$$
we have shown above 
$$
N_{C/Y}^0\subset N_{C/\overline A_g}^0
$$ 
is a direct summand and  of an orthogonal decomposition of $N_{C/\overline A_g}^0 $ w.r.t.the Hodge metric.  
Let $p: N_{C/\overline A_g}^0\to  N_{C/Y}^0 $ denote the projection.
  
For $i=1$, we consider the following commutative diagramm

$$
\xymatrix{0 \longrightarrow  N_{C/Y}^0 \ar[r]  \ar@{^{(}->}[d]  & N_{C/Y}^1  \ar[r]  \ar@{^{(}->}[d]
& N_{C/Y}^1/N_{C/Y}^0 \ar@{^{(}->}[d]  \longrightarrow 0  \\
0 \longrightarrow  N_{C/\overline A_g}^0\ar[r]                 & N_{C/\overline A_g}^1 \ar@{=}[d] \ar[r] &N_{C/\overline A_g}^1/N_{C/\overline A_g}^0   \longrightarrow 0 \\
& \makebox[1cm][l]{$ N_{C/\overline A_g}^0  \ar[d]^{(p,0)}\oplus N_{C/\overline A_g}^1$}&\\
& N_{C/\overline A_g}^0 & } 
$$

Since the composition map 
$$   
N_{C/Y}^0\to N_{C/Y}^1 \to  N_{C/\overline A_g}^1\to  N_{C/\overline A_g}^0 \stackrel{p}{\rightarrow}N_{C/Y}^0
$$
is the identity, the short exact sequence  
$$
0\to N_{C/Y}^0\to N_{C/Y}^1 \to N_{C/Y}^1/N_{C/Y}^0 \to 0
$$
splits, and 
$$   
N_{C/Y}^1 = N_{C/Y}^0\oplus N_{C/Y}^1/N_{C/Y}^0\subset N_{C/\overline A_g}^0 \oplus N_{C/\overline A_g}^1/N_{C/\overline A_g}^0=N^1_{C/\overline A_g} .
$$
Since $N_{C/Y}^0\subset N_{C/\overline A_g}^0 $ and $N_{C/Y}^1/N_{C/Y}^0\subset N_{C/\overline A_g}^1/N_{C/\overline A_g}^0$ are 
direct summands  and orthogonal, $N_{C/Y}^1\subset N^1_{C/\overline A_g} $ is a direct summand and orthogonal.\\[.1cm]
Finally, replacing $N^0$ by $N^1/N^0$, $N^1$ by $N^2$  and    $ N^1/N^0$ by $N^2/N^1$ in the above diagramm, we obtain
$N_{C/Y}\subset N_{C/\overline A_g}$ is a direct summand and orthogonal. The claim is thus proven. 

We are now in the position to finish i).  Since by diagram (\ref{1.3}) 
$$  
\varphi^* T_Y(-\log S_Y)=T_C(-\log S_C)\oplus N_{C/Y}\subset T_C(-\log S_C)\oplus N_{C/\overline A_g}=(i\circ\varphi)^*T_{\overline A_g}(-\log S),
$$
and by the above claim  $N_{C/Y}\subset N_{C/\overline A_g}$ is a direct summand and orthogonal,  
$$ 
\varphi^* T_Y(-\log S_Y)\subset (i\circ\varphi)^*T_{\overline A_g}(-\log S)
$$
The proof of i) is thus complete. 

(ii)  Let $i: Y^0 \hookrightarrow A_g$ be a special subvariety. Then $Y^0$ is a locally symmetric subvariety of the locally symmetric varity $A_g$  and the vector 
subbundle  $di: T_{Y^0}\hookrightarrow i^* T_{A_g}$  is a locally homogenous subbundle of the locally homogenous bundle  $i^*T_{A_g}$ 
in the sense of Mumford \cite[Sect. 3]{mum}.
As a locally homogenous bundle can be decomposed as direct sum of irreducible locally homogenous  subbundles and this decomposition is orthogonal w.r.t.
the invariant metric, $di: T_{Y^0}\hookrightarrow i^* T_{A_g}$  is a direct summand and orthogonal.   
Note that the Deligne extension of the sheaf of differential 1-forms  is the sheaf of differential 1-forms 
with logarithmic poles at infinity. By the uniqueness of Deligne's extension we get that
$di: T_{Y}(-\log S_Y)\hookrightarrow i^* T_{\overline A_g}(-\log S)$ is a direct summand and orthogonal.   
Thus,
$$ 
di: \varphi^*T_{Y}(-\log S_Y)\subset(i\circ \varphi)^*T_{\overline A_g}(-\log S)
$$
is a direct summand and orthogonal. (The argument here was pointed out by the referee.)\\
Since $\varphi: C^0\to A_g$ is a morphism of Shimura varieties, one has the decomposition
$$ 
  (i\circ\varphi)^*T_{\overline A_g}(-\log S)\simeq S^2(E^{0,1})=\sL^{-2}\otimes S^2(\sT)\oplus\sL^{-1}\otimes \sT\otimes\sU^\vee\oplus S^2(\sU^\vee)
$$
of polystable subbundles which can be decomposed further as the direct sum of irreducible stable subbundles.
$$
(i\circ\varphi)^*T_{\overline A_g}(-\log S)=K_1\oplus\cdots \oplus K_l.
$$ 
By a theorem of Atiyah \cite{atiyah} the category of vector bundles over any compact complex manifold
is Krull-Schmidt, i.e., in our case if there is a second decomposition
$$ 
(i\circ\varphi)^*T_{\overline A_g}(-\log S)=K_1'\oplus\cdots\oplus K_{l'}'
$$
of irreducible subbundles, then up to a permutation one has
$$
K_i\simeq K_{i'}'.
$$
This shows that $(i\circ\varphi)^*T_{Y}(-\log S_Y)$ is the direct sum of some direct factors of 
$\sL^{-2}\otimes S^2(\sT)$,  $\sL^{-1}\otimes \sT\otimes\sU^\vee$ and $S^2(\sU^\vee)$
and therefore the relative proportionality inequality \eqref{1.4} is an equality. 
\end{proof} 

In the proof of  Proposition 1.5 we see that the inclusion 
$$\varphi^*T_Y(-\log S_Y)\subset  (i\circ \varphi)^*T_{\overline A_g}(-\log S)$$
 is compatible with
the decompositions
$$
\begin{array}{cccc}
\varphi^*T_Y(-\log S_Y) =  &T_C(-\log S_C) \oplus  N_{C/Y} =& T_C(-\log S_C) \oplus &\bigoplus_{i=0}^1  N^{i+1}_{C/Y}/N^i_{C/Y}\\
\raisebox{2ex}{\rotatebox{-90}{$\hookrightarrow$}}                        &   || \quad  \quad \quad  \quad \quad   \raisebox{2ex}{\rotatebox{-90}{$\hookrightarrow$}}            & || &   \raisebox{2ex}{\rotatebox{-90}{$\hookrightarrow$}}  \\
(i\circ\varphi)^*T_{\overline A_g}(-\log S)=  &T_C(-\log S_C)  \oplus  N_{C/\overline A_g} 
=& T_C(-\log S_C) \oplus &\bigoplus_{i=0}^1  N^{i+1}_{C/\overline A_g}/N^i_{C/\overline A_g}.
\end{array}
$$
\begin{example}  If $Y$ is a Shimura surface, then $N_{C/Y}$ is a line bundle and there are three cases in which we write 
(HHP) in terms of more familiar intersection numbers, see \cite{bhh87} and \cite[Thm. 0.1]{mvz08}:  \\
(i) $Y$ is a \emph{Hilbert modular surface}:
$$ 
N_{C/Y}=N^0_{C/Y} \subset \sL^{-2}\otimes S^2(\sT)/\sL^{-2},\quad (HHP):\quad \omega_Y(S)\cdot C+2C^2=0.
$$
(ii) $Y$ is a \emph{Picard modular surface}:
$$
N_{C/Y} \cong N^1_{C/Y}/N^0_{C/Y} \subset \sL^{-1}\otimes \sT\otimes \sU^\vee,\quad (HHP):\quad  \omega_Y(S)\cdot C+3C^2=0.
$$
(iii) $Y$ is \emph{product of two Shimura curves}:
$$
N_{C/Y} \cong N^2_{C/Y}/N^1_{C/Y}\subset  S^2(\sU^\vee),\quad (HHP):\quad C^2=0,
$$
and $C$ lies in the fibres of one of the projections.
\end{example}

\begin{question} Does the (HHP) for a single compactified Shimura curve $C$ together with
$\varphi: C\to Y\subset \overline A_g$ as above imply that $Y^0\subset A_g$ is a special subvariety, 
if we assume that the algebraic monodromy group $H(Y^0)$ (see introduction) is $\Q$-simple ?
\end{question}

This question seems to be very optimistic and at the same time difficult to answer. 
However, we are not aware of any counterexamples if $H(Y^0)$ is $\Q$-simple.\\

It is our goal in the rest of the paper to show that the existence of \emph{''many''} special curves, e.g. a dense subset of such satisfying (HHP) 
forces $Y^0$ to be a special subvariety. 

\begin{remark} Consider the same situation $\varphi: C\to Y \subset \overline A_g$, where $Y^0\subset A_g$ is a special subvariety and
$C$ an arbitrary curve, not necessarily Shimura. Then one obtains an inequality opposite to \eqref{1.3}, see for example 
\cite[Thm. 0.3 and Thm. 2.3]{mvz08}.
\end{remark}

\section{Thickening of the Higgs field}

We use the same notation as in the previous section. In particular $C$ is a compactified Shimura curve together with a non-constant morphism
$\varphi: C \to  Y \subset \overline A_g$ factoring over a smooth projective subvariety $Y$ such that $S=\partial \overline A_g$ intersects $Y$ and 
the image of $C$ transversely. In the previous section we showed that under these assumptions there is a canonical splitting 
$$
\varphi^*\Omega^1_Y(\log S_Y) \cong \Omega^1_C(\log S_C)\oplus N_{C/Y}^\vee,
$$ 
see \eqref{1.3}. Also we denote by $E=E^{1,0} \oplus E^{0,1}$ the (logarithmic) Higgs bundle on $\overline A_g$ associated to the local system $\V_\C=R^1f_*\C$, 
where $f: X \to A_g$ is the universal family over $A_g$. Its restriction to $Y$ or $C$ will be denoted by the same symbol. 
We also make use of the complex vector bundle $\sV:=\V \otimes {\mathcal O}_{A_g}$ or its restrictions to $Y^0$ and $C^0$. 
The following definition is new in the literature and goes back to our discussions with Viehweg. It enables us to include the normal 
direction to $C$ in $Y$ into our considerations. 

\begin{definition}
We define the \emph{thickening} of the Higgs field $\theta$ on $C$ \emph{in the normal direction} $N_{C/Y}$ as the pullback of the Higgs bundle on 
$Y$ via $\varphi:C\to Y$:
$$
\theta_{C/Y}:=\varphi^*\theta: E^{1,0}\to E^{0,1}\otimes\varphi^*\Omega^1_Y(\log S_Y)=E^{0,1}\otimes(\Omega^1_C(\log S_C)\oplus N_{C/Y}^\vee).
$$
In the same way we define the \emph{thickening} of the Higgs field \emph{in a point} $p\in C$ in the normal direction  $N_{p/Y}$ as
$$
\theta_{p/Y}:=\theta_{C/Y}|_p: E^{1,0}|_p\to  E^{0,1}|_p\otimes \varphi^*\Omega^1_Y(\log S_Y)|_p=
E^{0,1}|_p\otimes(\Omega^1_C(\log S_C)|_p\oplus N_{C/Y}^\vee|_p).
$$
\end{definition}

Consider the $k$-fold tensor product $(E,\theta)^{\otimes k}$ of the Higgs bundle $(E,\theta)$ on $Y$. 
It decomposes as a direct sum
$$
E^{\otimes k}=\bigoplus_{p+q=k}E^{p,q}
$$
where  
$$
E^{p,q}=\bigoplus E^{p_1,q_1}\otimes\cdots\otimes E^{p_k,q_k}
$$
and where the sum ranges over $p_i+q_i=1,\, \sum_{i=1}^k p_i=p,\,\sum_{i=1}^k q_i=q$. 
The Higgs field, again denoted by $\theta,$ decomposes as
$$
\theta: E^{p,q}\to E^{p-1,q+1}\otimes\Omega^1_Y(\log S_Y),
$$
where
$$ 
E^{p_1,q_1}\otimes\cdots\otimes E^{p_k,q_k}
\stackrel{\theta} {\longrightarrow}\bigoplus_{i=1}^k
E^{p_1,q_1}\otimes\cdots \otimes E^{p_i-1,q_i+1}\otimes\cdots\otimes
E^{p_k,q_k}\otimes\Omega^1_Y(\log S_Y)
$$ 
satisfies the Leibniz rule
$$  
\theta|_{E^{p_1,q_1}\otimes\cdots\otimes E^{p_k,q_k}}=\sum_{i=1}^k{\rm id}\otimes\cdots\otimes\theta_{1,0}\otimes\cdots\otimes{\rm id}. 
$$
In the same way as in the definition above we define the thickening $(E,\theta_{C/Y})^{\otimes k}$ and 
$(E,\theta_{p/Y})^{\otimes k}$.

Assume for a moment that $Y^0$ is a locally symmetric quotient of a bounded symmetric domain. Then it is
well-known that $\Omega^1_{Y^0},$ and all Hodge bundles $E^{p,q}_{Y^0}$ are locally homogeneous vector bundles in the sense of Mumford \cite[Sect. 3]{mum}.
Furthermore, the Higgs map $\theta^{p,q}: E^{p,q}_{Y^0}\to E^{p-1,q+1}|_{Y^0}\otimes\Omega^1_{Y^0}$ is an equivariant morphism between
locally homogeneous vector bundles. We decompose $E^{p,q}_{Y^0}$ as the direct sum of irreducible locally homogeneous
subbundles
$$
E^{p,q}_{Y^0}=\bigoplus_i E^{p,q}_{Y^0,i}.
$$
Then we take Mumford's canonical extensions $E^{p,q}_{Y,i}$ \cite[Sect. 3]{mum}, which agrees with Deligne's extension by
\cite[Lemma 2.4]{mvz07} of those irreducible locally homogeneous subbundles, and we use the same symbols
$$ 
E^{p,q}_Y=\bigoplus_i E^{p,q}_{Y,i}
$$
for the extended decomposition by uniqueness of good extensions. Complex conjugation
$$
\V_{\C}\stackrel{-}{\rightarrow}\V_{\C}
$$
induces also a complex conjugation on the Deligne extensions of $\sV|_{C^0}=\V\otimes \sO_{C^0}$ and sends $E^{p,q}_Y$ to $E^{q,p}_Y$
($\simeq E^{p,q\vee}_Y$), hence $E^{p,q}_{Y,i}$ to $E^{q,p}_{Y,\overline i}$ $(\simeq E^{p,q\vee}_{Y,\,i})$.

Given a base point $y\in Y$ ($y$ could lie on the boundary $S_Y$) we consider  
$$
\theta^{p,p}_{y\in Y}: E^{p,p}_y\to E^{p-1,p+1}_y\otimes\Omega^1_Y(\log S_Y)_y,
$$
where $E_y^{p,p}$ carries the induced real structure from $\sV^{\otimes k}$. 
Its real structure is induced from $\V_\R=\V_\Q \otimes \R$. 


\begin{definition} Let $y\in Y$ be a base point: \\ 
(a) $W_{y\in Y}:=\{t\in  E^{p,p}_{y}\,|\, \theta_{y\in Y}(t)=0\}.$\\[.1cm]
(b) A tensor $t\in E^{p,p}_y\cap \sV^{\otimes k}_{\R, y}$ is called a \emph{real Hodge tensor} at the base point $y\in Y$. \\
(c) $W_{y\in Y,\R}:=\{t\in  E^{p,p}_{y}\cap \sV^{\otimes k}_{\R,y}\,|\, \theta_{y\in Y}(t)=0\}$.
\end{definition} 

\begin{remark} It is clear that  $W_{y\in Y,\R}\otimes \C\subset W_{y\in Y}$, but in general they are not equal.
\end{remark}

\begin{proposition}[Parallel Transport] \label{2.4} There exists a unitary subsystem
$\W \subset \V^{\otimes k}$ of pure Hodge type $(p,p)$, which naturally extends to $Y$, carries an induced real structure
from $\V^{\otimes k}$ and such that $\W_{\R,y}=W_{y\in Y,\R}$ for all $y\in Y$.
\end{proposition}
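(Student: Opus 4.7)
The plan is to identify $\W$ with the pure Hodge-type $(p,p)$ component of the maximal unitary sub-VHS of $\V^{\otimes k}$. By Deligne's semisimplicity together with Simpson's correspondence applied to the polystable Higgs bundle $(E^{\otimes k},\theta)$, one obtains an orthogonal decomposition with respect to the Hodge metric $(E^{\otimes k},\theta)=(E_u,0)\oplus(E_v,\theta_v)$, in which $(E_u,0)$ is the maximal direct factor with vanishing Higgs field. This decomposition is preserved by complex conjugation (since having trivial Higgs field is a conjugation-invariant property), hence is defined over $\R$. Because $\theta$ vanishes on $E_u$, its Hodge-type decomposition $E_u=\bigoplus_{p',q'}E_u^{p',q'}$ splits $E_u$ into local subsystems, each of pure Hodge type. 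I would set $\W:=E_u^{p,p}$; by construction, $\W$ is a unitary local subsystem of $\V^{\otimes k}$ of pure Hodge type $(p,p)$, with a real structure inherited from $\V^{\otimes k}$.

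The inclusion $\W_{\R,y}\subseteq W_{y\in Y,\R}$ is immediate, since sections of $\W$ are real, of type $(p,p)$, and killed by $\theta$. For the reverse inclusion, given $t\in W_{y\in Y,\R}$, I would decompose $t=t_u+t_v$ along $E_u\oplus E_v$. Both $t_u,t_v$ are real and of pure type $(p,p)$, and $\theta(t_v)=\theta(t)=0$. The crucial observation is the following reality argument: since $t_v=\bar t_v$, taking complex conjugates of $\theta(t_v)=0$ yields $\bar\theta(t_v)=\overline{\theta(t_v)}=0$, and under the identification $\theta^*=\bar\theta$ coming from the Hodge metric, this gives $\theta^*(t_v)=0$. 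Thus $t_v$ is a \emph{harmonic} element of $(E_v,\theta_v)$ at $y$, i.e., annihilated by both $\theta$ and its adjoint.

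The main obstacle is then to deduce $t_v=0$. The argument I envision is that a nonzero harmonic element of $(E_v,\theta_v)$ at $y$ should generate a nontrivial unitary sub-local system of $\V_v$ via parallel transport under the flat connection $\nabla=D_h+\theta+\theta^*$ (where $D_h$ is the Chern connection of the Hodge metric), which contradicts the maximality of $(E_u,0)$. Justifying that a pointwise harmonic element yields a \emph{global} flat subsystem is the delicate step; it rests on the polystability of $(E^{\otimes k},\theta)$ and on Simpson's bijection between slope-zero polystable Higgs bundles and semisimple local systems, which forces any flat unitary subbundle to appear as a direct factor. Granting this, $t_v=0$ and $t=t_u\in\W_{\R,y}$.

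Finally, to extend $\W$ to $Y$: by the unipotent monodromy assumption (from $N\gg 0$), Mumford's canonical extension of logarithmic Higgs bundles exists, is unique, and respects direct-sum decompositions. Thus $(E_u,0)\oplus(E_v,\theta_v)$ extends to $Y$, and in particular $E_u^{p,p}$ extends to a subbundle of the extended $E^{p,p}$. This provides the desired extension of $\W$, and the real structure together with the fiberwise identification $\W_{\R,y}=W_{y\in Y,\R}$ propagates to all of $Y$, including the boundary points in $S$.
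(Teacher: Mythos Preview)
Your identification of $\W$ with the $(p,p)$-piece of the maximal unitary sub-VHS of $\V^{\otimes k}$ is correct and coincides with the paper's $\ker(\theta^{p,p})_0$; the real structure and the easy inclusion $\W_{\R,y}\subseteq W_{y\in Y,\R}$ are also fine. The problem is precisely the step you flag as delicate. From $\theta_y(t_v)=0$ and $\theta^*_y(t_v)=0$ at a \emph{single} point you cannot conclude that $t_v$ spans a unitary sub-local system: parallel transport of $t_v$ by $\nabla=D_h+\theta+\theta^*$ produces a flat section, but nothing forces that section to stay in $\ker\theta$ at nearby points. Pointwise harmonicity is not preserved by the flat connection, and in an irreducible weight-two VHS with $h^{1,1}>2\dim Y$ the intersection $\ker\theta^{1,1}_y\cap\ker\theta^{*,1,1}_y$ is typically nonzero while no unitary factor exists. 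So the maximality argument does not close, and the reverse inclusion is not established.

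The paper avoids this pointwise-to-global passage entirely by invoking the standing hypothesis, introduced just before the proposition, that $Y^0$ is locally symmetric. Then $E^{p,p}_Y=\bigoplus_i E^{p,p}_{Y,i}$ decomposes into irreducible locally homogeneous subbundles, and since $\theta^{p,p}$ is an equivariant morphism its kernel is a \emph{global} subbundle equal to a sub-sum of the $E^{p,p}_{Y,i}$, all of slope $\le 0$ by Simpson polystability; one splits $\ker(\theta^{p,p})=\ker(\theta^{p,p})_0\oplus\ker(\theta^{p,p})_{<0}$ by slope. Complex conjugation permutes the irreducible summands of $E^{p,p}_Y$ and reverses slopes (since $\overline{E^{p,p}_{Y,i}}\simeq E^{p,p\vee}_{Y,i}$). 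Now if $t\in\ker(\theta^{p,p})_y$ is real, write $t=\sum_i t_i$; reality gives $t_{\bar i}=\overline{t_i}$, while membership in $\ker(\theta^{p,p})$ forces $t_i=0$ whenever $E^{p,p}_{Y,i}$ has positive slope. The pairing then kills the negative-slope components as well, so $t$ lies in the slope-zero part, which is $\W_y$. This is a global slope/reality argument rather than a fiberwise harmonic one; you should either adopt it or explain how the locally symmetric structure promotes your pointwise harmonic vector to a flat section.
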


\begin{proof} From the above discussion we know that $E^{p,p}_Y$ decomposes as direct sum of good extensions of irreducible locally homogeneous subbundles
$$ 
E^{p,p}_Y=\bigoplus_i E^{p,p}_{Y,i}.
$$ 
Since $\theta^{p,p} : E^{p,p}\to E^{p-1,p+1}\otimes\Omega^1_Y(\log S_Y)$ is a morphism between good extensions of locally homogeneous vector bundles, 
${\rm ker}(\theta^{p,p})$ is again a direct sum of good extensions of locally homogeneous subvector bundles, which are Higgs subbundles 
(with trivial Higgs field) of $(E,\theta)^{\otimes k}$. By Simpson's polystability all of them have non-positive slopes. We decompose (as holomorphic vector bundles)
$$
{\rm ker}(\theta^{p,p})= {\rm ker}(\theta^{p,p})_0\oplus {\rm ker}(\theta^{p,p})_{<0},
$$
where ${\rm ker}(\theta^{p,p})_0$ is the direct sum of good extensions of locally homogenous subvector bundles of slope zero and
${\rm ker}(\theta^{p,p})_{<0}$ is the  direct sum of good extensions of locally homogeneous subvector bundles of negative slopes.
By Simpson's correspondence ${\rm ker}(\theta^{p,p})_0$ underlies a unitary local subsystem $\W\subset \V^{\otimes k}$ of type $(p,p)$.
${\rm ker}(\theta^{p,p})_0$ is invariant under complex conjugation induced by the real structure on $\V^{\otimes k}.$  This can be seen as follows:
the complex conjugate $\overline{{\rm ker}(\theta^{p,p})_0}$ corresponds to the complex conjugate $\overline{\W}$ of $\W$, which is again a unitary sublocal system
of type $(p,p)$. Hence it vanishes under the Higgs field $\theta^{p,p}$, i.e., $\overline{{\rm ker}(\theta^{p,p})_0}\subset {\rm ker}(\theta^{p,p})$.  
Note that $\overline{{\rm ker}(\theta^{p,p})_0}$ is again the direct sum of some good extensions of locally homogeneuos subvector bundles of slope zero, 
hence we obtain $\overline{{\rm ker}(\theta^{p,p})_0}\subset {\rm ker}(\theta^{p,p})_0$. 
Clearly for all real vector $t\in {\rm ker}(\theta^{p,p})_{0,y}$ we have $\theta^{p,p}_y(t)=0,$ so $t\in  W_{y\in Y,\R}$. 
Conversely, let $t\in W_{y\in Y,\R}$.  Then $t$ is a real vector in ${\rm ker}(\theta^{p,p})_y$. There are no vectors in 
${\rm ker}(\theta^{p,p})_{<0,y}$ fixed by complex conjugation, because complex conjugation takes negative slopes to positive slopes. Therefore 
$t$ is a real vector in ${\rm ker}(\theta^{p,p})_{0,y}$. Thus we have shown
$$
\W_{\R, y} = W_{y\in Y,\R}.
$$ 
Since $\V^{\otimes k}$ has unipotent local monodromies around $S_Y$ and $\W$ is unitary, the local monodromies of $\W$ are trivial around $S_Y$.
Hence $\W$ extends across $S_Y$. 
\end{proof}

\begin{remark} For a rational Hodge tensor $t\in W_{y\in Y}$, $t$ is contained in a unitary local subsystem with a $\Z-$structure.
Hence the orbit
$$
\{ \rho(\gamma)(t)\,|\,\gamma\in \pi_1(Y^0,y)\}
$$
is finite.
\end{remark}

For a Shimura curve $C^0$ mapping to $A_g$ via $\varphi$ as above 
we can describe the above decomposition more precisely. For the Higgs bundle $E$ one has from section 1 
$$
E^{1,0}=\sL\otimes \sT\oplus \sU,\quad E^{0,1}=\sL^{-1}\otimes\sT\oplus \sU^\vee,
$$
where $\sL\otimes \sT$ and $\sL^{-1}\otimes\sT$ are polystable of slopes $\deg\sL$ and $-\deg\sL$ respectively.
Moreover if $\sU\not=0$ then $\sU$ and $\sU^\vee$ are both polystable of slope zero. So one obtains immediately:

\begin{lemma}  The sheaves $E^{p,q}$ are direct sums of polystable
sheaves $E^{p,q}_\iota$ of slopes $\mu(E^{p,q}_\iota)=\iota\deg\sL$ and one has: \\
(a) If $\sU=0,$ then $E^{p,q}_\iota\not=0$ if and only if $\iota=p-q,$ and $E^{p,q}=E_\iota^{p,q}.$\\
(b) If $\sU\not=0,$ then $E^{p,q}_\iota\not=0$ if and only if $\iota\in\{-q,\ldots, p\}.$ In this case $E^{p,q}_\iota$ is a direct
sum of sheaves of the form
$$
\bigoplus_{m-l=\iota}(\sL\otimes\sT)^{\otimes m}\otimes(\sL^{-1}\otimes\sT)^{\otimes l}\otimes\sU^{\otimes(p-m)}\otimes\sU^{\vee\otimes(q-l)}.
$$
(c) The sheaf $E^{p,p}_\iota$ is dual to $E^{p,p}_{-\iota}$.
\end{lemma}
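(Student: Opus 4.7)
The plan is to expand the tensor product $E^{\otimes k}$, with $k=p+q$, using the given decompositions
$E^{1,0}=\sL\otimes\sT\oplus\sU$ and $E^{0,1}=\sL^{-1}\otimes\sT\oplus\sU^\vee$, and then sort the resulting summands by the exponent of $\sL$ that appears. By the definition of $E^{p,q}$ given just before the lemma, a typical summand in $E^{p,q}$ corresponds to choosing, for each of the $p$ indices of Hodge type $(1,0)$, either $\sL\otimes\sT$ or $\sU$, and for each of the $q$ indices of Hodge type $(0,1)$, either $\sL^{-1}\otimes\sT$ or $\sU^\vee$. Writing $m$ (resp.\ $l$) for the number of times $\sL\otimes\sT$ (resp.\ $\sL^{-1}\otimes\sT$) is chosen, the resulting tensor factor is isomorphic to
$$
(\sL\otimes\sT)^{\otimes m}\otimes(\sL^{-1}\otimes\sT)^{\otimes l}\otimes\sU^{\otimes(p-m)}\otimes\sU^{\vee\otimes(q-l)}.
$$
I would then define $E^{p,q}_\iota$ to be the direct sum of all such summands with $m-l=\iota$.

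The slope computation is immediate: since $\sL\otimes\sT$ is polystable of slope $\deg\sL$, $\sL^{-1}\otimes\sT$ is polystable of slope $-\deg\sL$, and $\sU,\sU^\vee$ are polystable of slope $0$ (when nonzero), the displayed summand has slope $(m-l)\deg\sL=\iota\deg\sL$. Polystability of each such tensor product — and hence of $E^{p,q}_\iota$ itself — follows from the fact that tensor products of polystable Higgs bundles over a projective curve are polystable, which is a standard consequence of the Simpson correspondence invoked at the start of Section~1. This gives both the decomposition and the slope formula. For part (a), if $\sU=0$ then every summand with $p-m>0$ or $q-l>0$ is zero, forcing $m=p$ and $l=q$; hence the only nonzero piece is $E^{p,q}_{p-q}=(\sL\otimes\sT)^{\otimes p}\otimes(\sL^{-1}\otimes\sT)^{\otimes q}$. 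For part (b), if $\sU\neq 0$ then all pairs $(m,l)$ with $0\le m\le p$ and $0\le l\le q$ produce nonzero summands, so the range of $\iota=m-l$ is exactly $\{-q,\dots,p\}$, and the explicit form of $E^{p,q}_\iota$ is as stated.

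For part (c), I would use the self-duality of $\sT$ together with $(\sU)^\vee=\sU^\vee$. Dualizing a summand of $E^{p,p}_\iota$ with parameters $(m,l)$ yields
$$
(\sL^{-1}\otimes\sT^\vee)^{\otimes m}\otimes(\sL\otimes\sT^\vee)^{\otimes l}\otimes\sU^{\vee\otimes(p-m)}\otimes\sU^{\otimes(p-l)}
\cong(\sL\otimes\sT)^{\otimes l}\otimes(\sL^{-1}\otimes\sT)^{\otimes m}\otimes\sU^{\otimes(p-l)}\otimes\sU^{\vee\otimes(p-m)},
$$
i.e.\ a summand of $E^{p,p}$ with parameters $(m',l')=(l,m)$, so with $m'-l'=-\iota$. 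Thus $(E^{p,p}_\iota)^\vee\cong E^{p,p}_{-\iota}$, proving (c). The only nontrivial input is the polystability of tensor products, which I expect to be the main conceptual obstacle but which is covered by Simpson's theory; the remaining content is a bookkeeping exercise in how $\sL$-weights add under tensor products.
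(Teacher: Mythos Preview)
Your proof is correct and is precisely the natural elaboration the paper has in mind: the lemma is stated there without proof, introduced by ``So one obtains'' after recording that $\sL\otimes\sT$, $\sL^{-1}\otimes\sT$, $\sU$, $\sU^\vee$ are polystable of the indicated slopes. Your bookkeeping with the parameters $(m,l)$, the slope additivity, the appeal to Simpson for polystability of tensor products, and the duality argument using $\sT\cong\sT^\vee$ are exactly what is needed and match the paper's setup.
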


Consider the decomposition \eqref{1.3}
$$ 
\varphi^*T_Y(-\log S_Y)=N_{C/Y}\oplus T_C(-\log S_C).
$$
The assumption that $\varphi: C\to Y$ satisfies (HHP) implies the decomposition  in the proof of Proposition 1.2
$$ 
N_{C/Y}=N^0_{C/Y}\oplus N^1_{C/Y}/N^0_{C/Y}\oplus N^2_{C/Y}/N^1_{C/Y}
$$ 
such that 
$$
N^0_{C/Y}\subset N^0_{C/\overline A_g}=\sL^{-2}\otimes S^2(\sT)/\sL^{-2} \subset \sH om(\sL\otimes\sT, \sL^{-1}\otimes\sT)/\sL^{-2},
$$
$$
N^1_{C/Y}/N^0_{C/Y}\subset N^1_{C/\overline A_g}/N^0_{C/\overline A_g}=\sL^{-1}\otimes\sT\otimes\sU^\vee=\sH om (\sL\otimes\sT, \sU^\vee)
$$
and
$$
N^2_{C/Y}/N^1_{C/Y}\subset  N^2_{C/\overline A_g}/N^1_{C/\overline A_g}=S^2(\sU^\vee)\subset \sH om(\sU, \sU^\vee)
$$
are direct polystable factors of slopes $-2\deg\sL,$ respectively $-\deg\sL,$ respectively $0$.
In this way  we may decompose the thickening $\theta_{C/Y}$ in the form
$$
\theta_{C/Y}=\theta_C+\theta_{N_{C/Y}}=\theta_C+\theta_{N^0_{C/Y}}+\theta_{N^1_{C/Y}/N^0_{C/Y}}+\theta_{N^2_{C/Y}/N^1_{C/Y}}. 
$$

Using that decomposition we obtain:

\begin{lemma} \label{2.7} The thickening $\theta_{C/Y}$ on $E^{p,q}_\iota$ can be decomposed as a direct sum of morphisms:
$$
E^{p,q}_{\iota} \xrightarrow{\theta_C+\theta_{N^0_{C/Y}}} E^{p-1,q+1}_{\iota-2}\otimes (\Omega^1_C(\log S_C)\oplus N^{0 \; \vee}_{C/Y}),$$
$$
E^{p,q}_{\iota} \xrightarrow{\theta_{N^1_{C/Y}/N^0_{C/Y}}} E^{p-1,q+1}_{\iota-1}\otimes (N^1_{C/Y}/N^0_{C/Y})^\vee
$$
and
$$
E^{p,q}_{\iota} \xrightarrow{\theta_{N^2_{C/Y}/N^1_{C/Y}}} E^{p-1,q+1}_{\iota}\otimes (N^2_{C/Y}/N^1_{C/Y})^\vee.
$$
between polystable sheaves of the same slopes.
\end{lemma}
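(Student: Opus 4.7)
The plan is to unfold $\theta_{C/Y}=\theta_C+\theta_{N^0_{C/Y}}+\theta_{N^1_{C/Y}/N^0_{C/Y}}+\theta_{N^2_{C/Y}/N^1_{C/Y}}$ by applying the Leibniz rule slot by slot on the tensor power $(E,\theta)^{\otimes k}$ and then reading off how each summand shifts the $\iota$-grading provided by the previous lemma.

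First I would write a generic summand of $E^{p,q}_\iota$ as a tensor with $m$ factors $\sL\otimes\sT$, $l$ factors $\sL^{-1}\otimes\sT$, $p-m$ factors $\sU$ and $q-l$ factors $\sU^\vee$ (in some order) over the $k$ positions, subject to $m-l=\iota$. Proposition \ref{prop1.2}(i), together with (HHP), splits $\varphi^*\Omega^1_Y(\log S)$ as $\Omega^1_C(\log S)\oplus N^{0\vee}_{C/Y}\oplus(N^1_{C/Y}/N^0_{C/Y})^\vee\oplus(N^2_{C/Y}/N^1_{C/Y})^\vee$, and the inclusions
$$
N^0_{C/Y}\subset\sH om(\sL\otimes\sT,\sL^{-1}\otimes\sT)/\sL^{-2},\;\; N^1_{C/Y}/N^0_{C/Y}\subset\sH om(\sL\otimes\sT,\sU^\vee),\;\; N^2_{C/Y}/N^1_{C/Y}\subset\sH om(\sU,\sU^\vee)
$$
show that each piece of $\theta_{C/Y}$ acts on a single fixed pair of components in $E^{1,0}\to E^{0,1}$.

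Next I would track the effect on one slot: $\theta_C$ and $\theta_{N^0_{C/Y}}$ both replace a factor $\sL\otimes\sT$ by $\sL^{-1}\otimes\sT$, sending $(m,l)\mapsto(m-1,l+1)$ and hence $\iota\mapsto\iota-2$; $\theta_{N^1_{C/Y}/N^0_{C/Y}}$ replaces $\sL\otimes\sT$ by $\sU^\vee$, so $m\mapsto m-1$ with $l$ unchanged, giving $\iota\mapsto\iota-1$; and $\theta_{N^2_{C/Y}/N^1_{C/Y}}$ replaces $\sU$ by $\sU^\vee$, keeping both $m,l$ fixed, hence preserving $\iota$. Since distinct $\iota$-shifts land in distinct direct factors of $E^{p-1,q+1}=\bigoplus_{\iota'}E^{p-1,q+1}_{\iota'}$, the three contributions are automatically mutually orthogonal and they produce exactly the three arrows in the statement. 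For the slope equalities, Lemma 2.6 gives $\mu(E^{p,q}_\iota)=\iota\deg\sL$; the (HHP) equalities give $\mu(N^0_{C/Y})=-2\deg\sL$, $\mu(N^1_{C/Y}/N^0_{C/Y})=-\deg\sL$ and $\mu(N^2_{C/Y}/N^1_{C/Y})=0$, together with $\deg\Omega^1_C(\log S)=2\deg\sL$. Dualizing and summing with the slope of the appropriate $E^{p-1,q+1}_{\iota'}$ yields $\iota\deg\sL$ in all three cases, and polystability of the targets follows because each tensor factor is polystable (via Simpson for the $E^{p-1,q+1}_{\iota'}$-pieces, via the (HHP)-induced splitting for the cotangent summands) and tensor products and direct sums of polystable sheaves of the same slope remain polystable.

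The main obstacle is bookkeeping rather than conceptual: one must confirm that the decomposition of $\varphi^*\Omega^1_Y(\log S)$ into four polystable direct factors realizes the four pieces of $\theta_{C/Y}$ as genuinely independent contractions, not merely as graded quotients of a filtration. This is guaranteed by Proposition \ref{prop1.2}(i), which under (HHP) exhibits $\varphi^*T_Y(-\log S)\hookrightarrow\varphi^*T_{\overline A_g}(-\log S)\cong S^2(E^{0,1})$ as an orthogonal direct summand, so that contracting $\theta$ with each summand of $N_{C/Y}$ gives a well-defined morphism landing in an independent polystable subsheaf of the target, exactly as required.
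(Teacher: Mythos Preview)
Your argument is correct and follows essentially the same route as the paper: one first identifies, for $k=1$, the four elementary components of $\theta_{C/Y}$ acting as $\sL\otimes\sT\to\sL^{-1}\otimes\sT$ (for $\theta_C$ and $\theta_{N^0_{C/Y}}$), $\sL\otimes\sT\to\sU^\vee$ (for $\theta_{N^1_{C/Y}/N^0_{C/Y}}$), and $\sU\to\sU^\vee$ (for $\theta_{N^2_{C/Y}/N^1_{C/Y}}$), and then invokes the Leibniz rule to propagate this to $(E,\theta)^{\otimes k}$; you have simply written the Leibniz step out in full and added the explicit slope check, which the paper leaves implicit.
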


\begin{proof} Write 
$$
E^{1,0}=\sL\otimes\sT\oplus \sU,\quad E^{0,1}=\sL^{-1}\otimes\sT\oplus \sU^\vee,
$$
then $\theta_{C/Y}:E^{1,0}\to E^{0,1}\otimes\varphi^*\Omega^1_Y(\log S_Y)$ decomposes into the following terms:
$$
\sL\otimes\sT \xrightarrow{\theta_C} \sL^{-1}\otimes\sT\otimes\Omega^1_C(\log S_C),
$$
$$\sL\otimes\sT \xrightarrow{\theta_{N^0_{C/Y}}} \sL^{-1}\otimes\sT\otimes N^{0\vee}_{C/Y},
$$
$$\sL\otimes\sT \xrightarrow{\theta_{N^1_{C/Y}/N^0_{C/Y}}} \sU^\vee\otimes (N^1_{C/Y}/N^0_{C/Y})^\vee
$$
and
$$
\sU \xrightarrow{\theta_{N^2_{C/Y}/N^1_{C/Y}}} \sU^\vee\otimes (N^2_{C/Y}/N^1_{C/Y})^\vee.
$$
This proves the lemma for the case $k=1$. In general, one reduces the cases $k\geq 2$ to the case $k=1$ using the fact that
the thickening $\theta_{C/Y}^{\otimes k}$ is defined by the Leibniz rule. 
\end{proof}
 

\section{Parallel transport of real Hodge tensors on connected cycles of special curves}

In this section let $Y\subset \overline A_g$ be a smooth projective subvariety, which meets $S=\partial \overline A_g$ transversely.
Assume $Y$ contains a connected cycle $\sum_i C_{i}$ of finitely many compactified embedded special curves, such that
each component $C_i$ meets $S_Y=S \cap Y$ transversely and satisfies (HHP). Using base points $y_i\in C_i^0$ and 
notations from the previous section we introduce the following subspaces:
$$
\begin{array}{ccc}
W_{y_i\in Y}&:=&\{t\in  E^{p,p}_{y_i}\,|\, \theta_{y_i\in Y}(t)=0\},     \\ 
\cap &  &                                                                \\
W_{y_i\in C_i}&:=&\{t\in  E^{p,p}_{y_i}\,|\, \theta_{y_i\in C_i}(t)=0\}  \\
\end{array}
$$
and the real spaces
$$
\begin{array}{ccc}
W_{y_i\in Y,\R}&:=&\{t\in  E^{p,p}_{y_i}\cap \sV^{\otimes k}_{\R}|_{y_i}\,|\, \theta_{y_i\in Y}(t)=0\},\\
\cap &  &                                                                                                  \\
W_{y_i\in C_i,\R}&:=&\{t\in  E^{p,p}_{y_i}\cap \sV^{\otimes k}_{\R}|_{y_i}\,|\, \theta_{y_i\in C_i}(t)=0\}.\\
\end{array}
$$
Fixing a base point $y_1\in C^0_1$ we now need to study the parallel transport of real vectors in $W_{y_1\in Y,\R}$ along paths in the connected
subspace $\bigcup_i C_i$.

\begin{proposition} \label{3.1} (a) The real subspace $W_{y_1\in Y,\R} \subset \V^{\otimes k}_{\R}$ is invariant under
the monodromy action $\rho^{\otimes k}(\pi_1(\bigcup_iC^0_i, y_1))$. \\
(b) Assume that $E^{p,p}_{C_i}$ is polystable of slope zero for all $C_i$. Then $W_{y_1\in Y}$ is invariant under
the monodromy action $\rho^{\otimes k}(\pi_1(\bigcup_iC^0_i, y_1))$.
\end{proposition}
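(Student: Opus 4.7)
The plan is to build on each special curve $C_i$ a unitary local subsystem $\sK^{(i)}\subset\V^{\otimes k}|_{C_i^0}$ whose real stalks coincide with the fibrewise subspaces $W_{y\in Y,\R}$, and then to concatenate the resulting local invariance statements along the connected cycle $\sum_i C_i$ via its intersection points. First I apply Proposition~\ref{2.4} to each Shimura curve $C_i^0$, regarded as a locally symmetric space in its own right: this yields a unitary local subsystem $\W^{(i)}\subset\V^{\otimes k}|_{C_i^0}$ whose associated polystable Higgs subbundle is $\ker(\theta_{C_i}^{p,p})_0\subset E^{p,p}|_{C_i}$ (with zero Higgs field and slope zero) and whose real stalk at any $y$ equals $W_{y\in C_i,\R}$. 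By Lemma~\ref{2.7} this subbundle is supported in the $\iota=0$ component $E^{p,p}_0$.

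Next I restrict the normal piece $\theta_{N_{C_i/Y}}$ of the thickened Higgs field to $\ker(\theta_{C_i}^{p,p})_0$. Using (HHP) together with the slope computation in Lemma~\ref{2.7}, each of the three target summands $E^{p-1,p+1}_{-2}\otimes N^{0\vee}_{C_i/Y}$, $E^{p-1,p+1}_{-1}\otimes(N^1_{C_i/Y}/N^0_{C_i/Y})^\vee$ and $E^{p-1,p+1}_0\otimes(N^2_{C_i/Y}/N^1_{C_i/Y})^\vee$ turns out to be polystable of slope exactly $0$. A short polystability argument --- a saturated subsheaf of a polystable slope-$0$ bundle has slope $\le 0$, a torsion-free quotient embedded into such a bundle has slope $\ge 0$, and the rank-weighted sum of the two slopes must vanish --- forces both kernel and image to be polystable of slope $0$, so the kernel is a direct summand. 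By Simpson's correspondence it underlies a unitary local subsystem $\sK^{(i)}\subset\W^{(i)}$ with trivial Higgs field.

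A real vector $t$ at $y\in C_i^0$ lies in $\sK^{(i)}_{\R,y}$ iff $\theta_{y\in C_i}(t)=0$ and $\theta_{y\in N_{C_i/Y}}(t)=0$, which by the splitting $\varphi^*\Omega^1_Y(\log S)\cong\Omega^1_{C_i}(\log S)\oplus N_{C_i/Y}^\vee$ of Proposition~\ref{prop1.2} is precisely the condition $\theta_{y\in Y}(t)=0$; thus $\sK^{(i)}_{\R,y}=W_{y\in Y,\R}$ throughout $C_i^0$, and $W_{y\in Y,\R}$ is automatically invariant under $\rho^{\otimes k}(\pi_1(C_i^0,y))$. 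To close (a), any loop $\gamma\in\pi_1(\sum_i C_i^0,y_1)$ decomposes as a concatenation of paths inside individual $C_i^0$ glued at intersection points $p=C_i\cap C_j$ of the cycle; at each such $p$ one has $\sK^{(i)}_{\R,p}=W_{p\in Y,\R}=\sK^{(j)}_{\R,p}$, so parallel transport segment by segment stays inside the family $\{W_{y\in Y,\R}\}_y$ and the full monodromy preserves $W_{y_1\in Y,\R}$. For (b), the additional hypothesis that $E^{p,p}|_{C_i}$ is polystable of slope zero forces $\ker(\theta_{C_i}^{p,p})_0=\ker(\theta_{C_i}^{p,p})$, so $\W^{(i)}_y=W_{y\in C_i}$ holds without the real restriction; running the identical kernel construction yields $\sK^{(i)}_y=W_{y\in Y}$, and the same gluing argument produces the stronger complex invariance statement.

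The main technical obstacle is the slope calculation in the second step: one has to translate (HHP) into the statement that all targets of $\theta_{N_{C_i/Y}}|_{E^{p,p}_0}$ are polystable of slope \emph{exactly} $0$, which is what allows Simpson's polystability to cut out the kernel as a direct summand rather than merely a subsheaf. Once this polystable kernel is in hand, the identification of its real stalks with $W_{y\in Y,\R}$ and the patching across intersection points of the cycle are essentially formal.
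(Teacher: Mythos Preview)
Your argument is correct and follows essentially the same route as the paper: apply Proposition~\ref{2.4} on each $C_i$ to obtain the unitary subsystem $\W^{(i)}$, cut it down by the kernel of the normal thickening $\theta_{N_{C_i/Y}}$ using Lemma~\ref{2.7} and polystability to get a unitary subsystem $\sK^{(i)}$, identify its real fibres with $W_{y\in Y,\R}$, and then patch along intersection points of the cycle. The paper phrases the third step slightly more cautiously, recording only the inclusion $W_{y_1\in Y,\R}\subset\W'_{C_1,y_1}$ and then checking by hand that parallel transport of a real vector lands back in $W_{y_2\in Y,\R}$; your pointwise equality $\sK^{(i)}_y\cap\V^{\otimes k}_{\R,y}=W_{y\in Y,\R}$ is a mild sharpening that follows from the same ingredients and makes the gluing a little cleaner.
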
  
  
\begin{proof} (a) We have the decomposition 
$$ 
\theta_{y_1/Y}=\theta_{y_1/C_1 }\oplus\theta_{N_{C_1/Y},y_1}.
$$
Hence, 
$$ 
W_{y_1\in Y,\R}=\{\, t\in W_{y_1\in C_1,\R}\,|\, \theta_{N_{C_1/Y},y_1}=0\,\}.
$$
By Proposition \ref{2.4} there exists a unitary subsystem $\W_{C_1} \subset \V^{\otimes k}_{\R}$ of Hodge type $(p,p)$ such that
$W_{y_1\in C_1,\R}=\W_{C_1, y_1}$. 

Let $\sW_{C_1}\subset E^{p,p}_{C_1}$ denote the polystable subbundle of slope zero corresponding to $\W_{C_1}$. Then by Lemma \ref{2.7}
$$
\theta_{N_{C_1/Y}}: \sW_{C_1}\to  \theta_{N_{C_1/Y}}(\sW_{C_1}) 
$$ 
is a morphism between polystable bundles of slope zero. Hence the kernel
$$ 
{\rm ker}( \theta_{N_{C_1/Y}}: \sW_{C_1}\to  \theta_{N_{C_1/Y}}( \sW_{C_1} )  )=:\sW'_{C_1}
$$   
is a polystable subbundle of $\sW_{C_1}$ of slope zero. Therefore it underlies a unitary subsystem $\W'_{C_1}\subset \W_{C_1}\otimes \C$.
From the construction of $\W'_{C_1}$ we see that 
$$ 
W_{y_1\in Y,\R}\subset  \W'_{C_1, y_1}.
$$ 
We start with a real vector $t_1 \in W_{y_1\in Y,\R}$ and denote by $t_2$ the parallel transport of $t_1$ as a vector in the fibre of the local system  
$\V^{\otimes k}_{\R,y_1}$ along some path in $C_1^0$ from $y_1$ to $y_2\in C^0_1\cap C^0_2$. 
Since $t_1$ is contained in the fibre of the subsystem $\W'_{C_1}\subset \V^{\otimes k}_{\C,C_1}$  at $y_1$, $t_2$ is a real vector 
(because of the real structure on $\V^{\otimes k}$) and contained in the fibre of $\W'_{C_1}$ at $y_2$. By the construction
of $\W'_{C_1}$, we see that $\theta_{y_2/C_1}(t_2)=0$ and $\theta_{N_{C_1/Y}, y_2}(t_2)=0$, i.e., $t_2\in W_{y_2\in Y,\R}$. 

Regarding $y_2\in C^0_2$ we repeat the above argument and continue the parallel transport of $t_2$ along some path in 
$C_2^0$ from $y_2$ to $y_3\in C_2\cap C_3$ etc.. This shows that $W_{y_1\in Y,\R}\subset \V^{\otimes k}_{\R, y_1}$
is invariant under the monodromy action $\rho^{\otimes k}(\pi_1(\bigcup_iC^0_i, y_1))$.

(b) Since $E_{C_1}^{p,p}$ is polystable of slope zero, ${\rm ker }(\theta^{p,p}_{C_1})$ is a Higgs subbundle of slope zero
(with trivial Higgs field) and it corresponds to an (extended) unitary local system  $\W_{C_1}\subset \V^{\otimes k}_{C_1}$ 
with an induced real structure, and such that $\W_{C_1,y}=W_{y\in C_1}$ for all $y\in C_1$. Since
$W_{y_1 \in Y}=\{t\in W_{y_1\in C_1}\,|\, \theta_{N_{C_1/Y},y_1}(t)=0\,\}$, by the same argument as in (a) we find a unitary subsystem
$\W'_{C_1}\subset \W_{C_1}$ such that $\W'_{C_1, y}=W_{y\in Y}$ for all $y\in C_1$. The rest of the proof is exactly the same as in (a). 
\end{proof}  

\begin{definition}
We say that $Y^0$ can be covered by a smoothing of a cycle $\sum_i a_i C_i$ of special curves $C_i$ satisfying (HHP), if there is 
a suitable linear combination $\sum_i a_iC_i$ of compactified special curves $C_i \subset Y$ satisfying (HHP) which 
can be deformed into a generically smooth family of curves $\cup_{z\in Z} C_z$ filling out $Y$, i.e., such that $\sum_i a_iC_i$ is a degenerate fibre 
of a generically smooth family of curves $\cup_{z\in Z}C_z$ over some parameter scheme $Z$. 
\end{definition}

\begin{proposition} \label{3.2} Assume that the algebraic  monodromy group $H(Y^0)$ of $Y^0$ (defined in the introduction) is $\Q$-simple and that 
$Y^0$ can be covered by a smoothing of a cycle $\sum_i a_i C_i$ of special curves $C_i$ satisfying (HHP). We fix a base point $y_0 \in C^0_1$.\\
(a) Then $W_{y_1\in Y,\R} \subset \V^{\otimes k}_{\R,y_1}$ is $\rho^{\otimes k}(\pi_1(Y^0, y_0))$-invariant.\\
(b) Under the assumption in Prop.~\ref{3.1} (b), $W_{y_0\in Y}\subset \V^{\otimes k}_{\C,y_1}$  is $\rho^{\otimes k}(\pi_1(Y^0,y_0))$-invariant.
\end{proposition}

The following Lemma is Proposition  2.2.2 in \cite{zuo99}. It will be used below. 

\begin{lemma} \label{zuoslemma} 
Let $X$ be a smooth complex quasi-projective variety, $k$ a field of characteristic 0, $G$ an almost simple $k$-algebraic group and 
$$
\rho: \pi_1(X, *)\to G(k)
$$
be a Zariski dense representation. Then the following holds: \\[.1cm]
{(1)} If $\pi: X'\to X$ is a surjective and generically finite morphism, and $X'$ is smooth, then 
$\pi^*(\rho)$ is again Zariski dense.\\
{(2)}  If $f: X\to Y$ is a surjective morphism to a smooth quasi-projective variety $Y$ with connected  
fibres, and if $f^{-1}(y)\subset X$ is a smooth fibre, then there are two possibilities:\\
either\\
{(i)} the restriction $\rho|_{f^{-1}(y)}$ is again Zariski dense, or\\[.1cm]
{(ii)} $\rho|_{f^{-1}(y)}$ has finite image.
\end{lemma}

\begin{proof} (Proposition \ref{3.2})  (a) Fix a smooth curve $C_z$ in the family $\cup_{z\in Z}C_z$ and a base point $* \in C_z$. Then $C_z^0$ deformes to 
$\sum_ia_iC_i^0$ and $*$ moves to $y_0\in \sum_iC_i^0$ along a path $\gamma_{* y_0}$. This implies that any loop lying
on $C_z^0$ with base point $*$ is homotopic to some loop lying on $\sum_iC_i^0$ with base point $y_0$. 
By Proposition \ref{3.1} the induced representation
$$
\rho_{C_z^0}: \pi_1(C_z^0,*)\to \pi_1(Y^0,*)\stackrel{\rho^{\otimes k}}{\rightarrow}\V_*^{\otimes k}
$$
stabilizes the real subspace of $W_*\subset E^{p,p}_{\R,*}$ which is the parallel transport of $W_{y_0\in Y}$ along the path $\gamma_{*y_o}^{-1}$.

By assumption, the algebraic monodromy group $H(Y^0)$ is $\Q-$simple. The covering family is given by a correspondence in $Y \times Z$ 
and can be chosen such that there are 
finitely many curves through a generic point of $Y$. Therefore, after taking a generically finite base change  $Y^{0'} \to Y^0$, 
we may assume that the family gives rise to a surjective map $g: Y^0\to Z^0$ with connected fibres, and such that $C_z^0\subset Y^0$ 
is a smooth fibre of $g$. 
Note that this modification does not change the algebraic monodromy group $H(Y^0)$ by (1) in Lemma \ref{zuoslemma}.

By (2) in Lemma \ref{zuoslemma} there are two possibilities:
either (i): $H(C_z^0)=H(Y^0)$, or (ii): $H(C_z^0)$ is a finite group. 
The case (ii) is impossible. Otherwise the restricted representation 
$\rho_{C^0_z}: \pi_1(C^0_z*)\to Sp(2g,\mathbb Q)$ would have finite image, 
which implies that the restricted period map $\varphi :C^0_z\to A_g$ is constant. A contradiction.\\[.1cm]
So we obtain $H(C_z^0)= H(Y^0)$. Since $\rho^{\otimes k}_{C_z^0}$ stabilizes $W_*$, which is indeed an algebraic condition for the monodromy matrices of $\rho_{C^0_z}$, 
the representation $H(C_z^0)$ in $\V_*^{\otimes k}$ also stabilizes $W_*$. This shows that $\rho^{\otimes k}(\pi_1(Y^0,*))$ stabilizes $W_*$.
Now by moving the base point $*$ along the path $\gamma_{* y_0}$ to $y_0$ we obtain that $W_{y_0\in Y,\R}$ is $\rho^{\otimes k}(\pi_1(Y^0,y_0))$-invariant.
The proof of (b) is the same as the one of (a). 
\end{proof} 

\begin{corollary} \label{3.3} 
Assume $W_{y_0\in Y}$ carries a $\Q$-structure from $\V^{\otimes k}$. Then the subsystem $\U^{p,p}_{Y^0}$ has finite monodromy  
and $W_{y_0\in Y}$ extends to a subspace of sections of $\V^{\otimes k}_{Y^0}$.
\end{corollary}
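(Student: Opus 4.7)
The plan is to combine Proposition~\ref{3.2}(b) with the rationality hypothesis to upgrade $W_{y_0\in Y}$ to a $\Q$-sub-local-system of $\V^{\otimes k}|_{Y^0}$ of pure Hodge type $(p,p)$, and then to invoke the standard fact that a polarized $\Q$-VHS concentrated in a single bidegree whose monodromy preserves an integral lattice has finite monodromy.

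First, Proposition~\ref{3.2}(b) tells us that $W_{y_0\in Y}\subset \V^{\otimes k}_{\C,y_0}$ is stable under the monodromy representation $\rho^{\otimes k}\colon \pi_1(Y^0,y_0)\to \Gl(\V^{\otimes k}_{y_0})$. Combined with the hypothesis that $W_{y_0\in Y}$ is defined over $\Q$ inside $\V^{\otimes k}_{y_0}$, this invariance exhibits $W_{y_0\in Y}$ as the fibre at $y_0$ of a $\Q$-sub-local-system $\U^{p,p}_{Y^0}\subset \V^{\otimes k}|_{Y^0}$. By the definition of $W_{y_0\in Y}$, every element of the fibre lies in the pure Hodge component $E^{p,p}_{y_0}$ and is killed by the Higgs field $\theta^{p,p}$, so $\U^{p,p}_{Y^0}$ is purely of Hodge bidegree $(p,p)$ and its associated Higgs bundle carries the trivial Higgs field.

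Second, $\U^{p,p}_{Y^0}$ inherits a polarization from $\V^{\otimes k}$, and since it is concentrated in a single Hodge bidegree the Griffiths transversality target vanishes; the Hodge metric is therefore a flat Hermitian metric, so $\U^{p,p}_{Y^0}$ is unitary. The same conclusion follows from Simpson's correspondence applied to the associated Higgs bundle, which has trivial Higgs field and must be polystable of slope zero by the polarization.

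Third, the $\Z$-structure on $\V^{\otimes k}$ inherited from $R^1f_*\Z$ induces a lattice in each fibre of $\U^{p,p}_{Y^0}$, and this lattice is preserved by the monodromy. The monodromy thus lands in the intersection of the unitary group of the polarization with $\Gl_n(\Z)$, which is a finite group. Hence $\U^{p,p}_{Y^0}$ has finite monodromy. Passing to the finite \'etale cover $\pi\colon \widetilde Y^0\to Y^0$ that trivializes $\pi^*\U^{p,p}_{Y^0}$, each vector in $W_{y_0\in Y}$ extends uniquely to a flat global section of $\pi^*\V^{\otimes k}$, and these flat sections furnish (upon Galois-equivariant descent) the subspace of sections of $\V^{\otimes k}|_{Y^0}$ claimed in the statement. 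The main obstacle is the unitarity step, and it is what forces the $\Q$-rationality hypothesis into the picture: without it one cannot conclude that an integral lattice is preserved, and without purity of the Hodge bidegree combined with the polarization one cannot conclude that the monodromy is unitary to begin with.
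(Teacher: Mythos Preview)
The paper states Corollary~\ref{3.3} without proof, so there is no argument to compare against; your proposal correctly supplies the intended reasoning, which is essentially the content of Remark~2.5 combined with Proposition~\ref{3.2}.

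One small point: you invoke Proposition~\ref{3.2}(b), which carries the extra hypothesis from Proposition~\ref{3.1}(b) that $E^{p,p}_{C_i}$ be polystable of slope zero for all $C_i$. It is cleaner to avoid this. The $\Q$-structure hypothesis already forces $W_{y_0\in Y}=W_{y_0\in Y,\R}\otimes\C$: a $\Q$-basis of $W_{y_0\in Y}$ consists of real vectors in $E^{p,p}_{y_0}$ annihilated by $\theta_{y_0\in Y}$, hence lies in $W_{y_0\in Y,\R}$, giving $W_{y_0\in Y}\subset W_{y_0\in Y,\R}\otimes\C$, while the reverse inclusion is automatic. Then Proposition~\ref{3.2}(a) alone yields the monodromy invariance of $W_{y_0\in Y,\R}$, hence of $W_{y_0\in Y}$, without any polystability assumption. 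The rest of your argument (unitarity from pure bidegree plus polarization, finiteness from the preserved lattice, extension to flat sections after a finite \'etale cover) is correct as written.
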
 

We assume now $Y^0$ is contained in a Shimura subvariety $M^0 \subset A_g$ of type $SO(2,n)$ with toroidal compactification $M \subset \overline A_g$. 
Then $\V^{\otimes 2}_{M^0}$ contains a sub-VHS of Hodge structures $\V'$, whose corresponding Higgs bundle has the form
$$ 
E=E^{2,0}\oplus E^{1,1}\oplus E^{0,2},\quad \quad  \theta^{2,0}: T_{M}(-\log S_M)\otimes E^{2,0}\simeq E^{1,1},\quad \theta^{1,1}=\theta^{2,0\vee}.
$$
The \emph{Griffiths-Yukawa coupling} for $E$ along any subvariety $Z \subset {M}$ meeting $S_M$ transversely is the iterated Kodaira-Spencer derivative 
$$
E^{2,0} \longrightarrow E^{0,2} \otimes S^2 \Omega^1_Z(\log S_Z).
$$
The following statements and proofs will use this notation.

\begin{theorem} \label{3.4} Let $Y \subset \overline A_g$ be as above. Assume that $Y^0$ is contained in a Shimura subvariety $M^0 \subset A_g$ of type $SO(2,n)$
We assume that $Y$ and $M$ intersect the boundary $S$ of $A_g$ transversely, and that 
$Y$ can be covered by a smoothing of a cycle $\sum_i a_i C_i$ of special curves $C_i \subset Y$ satisfying (HHP). Then: \\ 
(a) If $W_{y_0\in Y}=W_{y_0\in Y,\R}\otimes \C$ for some $y_0\in C_1$ then $Y^0\subset M^0$ is a special subvariety of orthogonal type.\\
(b) If the Griffiths-Yukawa couplings along all $C_i$ do not vanish then $Y^0\subset M^0$ is a special subvariety of orthogonal type. \\
(c) If the Griffiths-Yukawa coupling along $Y$ vanishes then $Y^0\subset M^0$ is a special subvariety of unitary type, i.e., a ball quotient.
\end{theorem}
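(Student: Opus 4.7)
The plan is to combine the $\pi_1(Y^0)$-invariance supplied by Proposition~\ref{3.2} with the classification of Hermitean subgroups of ${\rm SO}(2,n)$ recalled in the introduction, and then to use the reality or Griffiths--Yukawa hypothesis to discriminate between the orthogonal and the unitary types. First I would reduce to the case where $H(Y^0)$ is $\Q$-simple. If $H(Y^0)$ is non-$\Q$-simple, the classification forces $\dim Y^0=2$ and $Y^0\cong \H\times\H/\Gamma$, so $Y^0$ is already a special subvariety of product type; in that case the factor curves give non-vanishing Yukawa couplings and a real Higgs kernel, so each of (a), (b), (c) holds with the claimed type. Assuming then that $H(Y^0)$ is $\Q$-simple, the classification leaves only two possibilities: orthogonal (a sub-${\rm SO}(2,m)\subset{\rm SO}(2,n)$) or unitary (${\rm SU}(1,m)$), and the task reduces to discriminating between these two.

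The smoothing hypothesis is exactly the input of Proposition~\ref{3.2}, so $W_{y_0\in Y,\R}$ is invariant under $\rho^{\otimes k}(\pi_1(Y^0,y_0))$. For case (a), the equality $W_{y_0\in Y}=W_{y_0\in Y,\R}\otimes\C$ promotes this to invariance of the full complex subspace. After replacing $Y^0$ by a suitable finite \'etale cover (which changes neither $H(Y^0)$ nor the hypotheses), a Galois-orbit argument endows $W_{y_0\in Y}$ with a $\Q$-structure compatible with $\V^{\otimes k}_\Q$. Corollary~\ref{3.3} then produces a parallel sub-local system of horizontal Hodge tensors on $Y^0$ cutting out a subgroup of ${\rm SO}(2,n)$ that contains $H(Y^0)$. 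The additional Hodge tensor distinguishing the unitary from the orthogonal case is essentially a complex structure $J$, which is not fixed by complex conjugation; the reality hypothesis therefore forces $H(Y^0)$ to be orthogonal, whence $Y^0$ is a special orthogonal subvariety.

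Cases (b) and (c) rest on the standard Higgs-theoretic characterization of the two types: the unitary (ball quotient) case is characterized by the identical vanishing of $\theta^2:E^{2,0}\to E^{0,2}\otimes S^2\Omega^1$, equivalently of the Griffiths--Yukawa coupling, while the orthogonal case has non-vanishing Yukawa. For (b), non-vanishing of $\theta^2$ along some tangent direction at each $C_i$ rules out the unitary possibility, leaving $H(Y^0)$ orthogonal by the classification. For (c), the global vanishing of $\theta^2$ on $Y$ forces $\theta(T_Y\otimes E^{2,0})\subset \ker\theta^{1,1}=W$, producing many elements of $W$; applying Proposition~\ref{3.2} in the form of part (b), whose hypothesis is met because the vanishing Yukawa makes the relevant $E^{p,p}_{C_i}$ polystable of slope zero, then propagates this structure to all of $Y^0$, and combined with the classification forces $H(Y^0)$ to be unitary, so $Y^0$ is a ball quotient.

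The main obstacle is case (a): promoting the real, $\pi_1(Y^0)$-invariant subspace $W_{y_0\in Y}$ to a genuine $\Q$-structure requires care, because such a subspace is only a priori defined over $\overline{\Q}$ with finite Galois orbit, and one must organize the passage to the finite \'etale cover so that the resulting Hodge tensors generate the correct ring of invariants for $H(Y^0)$ inside the tensor algebra of $\V$. Once this rational structure is in place, Corollary~\ref{3.3} together with the orthogonal/unitary dichotomy from the classification of Hermitean subgroups delivers the conclusion in all three cases.
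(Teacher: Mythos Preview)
Your reduction to the $\Q$-simple case and your invocation of the orthogonal/unitary dichotomy for Hermitean subgroups of ${\rm SO}(2,n)$ match the paper's opening move. But the rest of the argument has a structural gap: identifying the isomorphism type of $H(Y^0)$ is \emph{not} the same as proving that $Y^0$ is special. If $H(Y^0)\cong{\rm SO}(2,m)$, then $Y^0$ lies in the associated $m$-dimensional orthogonal Shimura subvariety $M'\subset M$, but nothing in your argument prevents $Y^0\subsetneq M'$. Each of your parts (a), (b), (c) ends by naming the type of $H(Y^0)$ and stopping; the step from ``$H(Y^0)$ is orthogonal (resp.\ unitary)'' to ``$Y^0$ is a special subvariety'' is precisely what has to be proved.

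The paper closes this gap not by analysing $H(Y^0)$ abstractly but by constructing the uniformizing Higgs bundle on $Y$. In (a), once $W_{y_0\in Y}=W_{y_0\in Y,\R}\otimes\C$ is $\pi_1(Y^0)$-invariant (this follows directly from Proposition~\ref{3.2}(a); no $\Q$-structure is needed, so your ``main obstacle'' is a detour), it spans a unitary summand $E^{1,1''}_Y$ of rank $\dim W_{y_0\in Y}=\dim M-\dim Y$. This rank count forces
\[
\theta_Y:\; T_Y(-\log S)\otimes E^{2,0}_Y \;\longrightarrow\; E^{1,1'}_Y
\]
to be an isomorphism, which is exactly the statement that $Y^0\hookrightarrow M$ is totally geodesic of orthogonal type; rigidity then gives specialness. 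In (b) and (c) the paper argues through the saturated Higgs subsheaf $(F_Y,\theta_Y)\subset(E_Y,\theta_Y)$ generated by $E^{2,0}_Y$: the (HHP) condition along each $C_i$ yields $\det F_Y\cdot C_i=0$, hence $\det F_Y\cdot C_z=0$ for a generic smoothing $C_z$, and Simpson polystability upgrades $F_{C_z}$ to a sub-VHS which then extends over $Y^0$ and uniformizes it.

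There is also a concrete error in your (c). When the Griffiths--Yukawa coupling vanishes along $C_i$ one has $E^{1,1}_{C_i}\cong \sL\oplus\sL^{-1}\oplus\sU^{1,1}$ with $\deg\sL>0$, so $E^{1,1}_{C_i}$ is \emph{not} polystable of slope zero and the hypothesis of Proposition~\ref{3.1}(b)/\ref{3.2}(b) fails. That hypothesis holds precisely in case (b), where non-vanishing Yukawa gives the splitting $\V'_{C_i}=\V''\oplus\U^{1,1}$ with $E^{1,1'}_{C_i}$ of rank one and degree zero; this is why the paper's first proof of (b) can invoke Proposition~\ref{3.2}(b), while its proof of (c) must bypass Proposition~\ref{3.2} altogether and use the degree argument on $F_Y$ instead.
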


\begin{remark} It is not hard to show that the assumptions of the theorem 
are necessary, since by Borcherds' results \cite{borcherds} any Shimura variety of type $SO(2,n)$ contains 
sections of powers of automorphic line bundles which are unions of orthogonal special subvarieties and components of $S_M$.

One can show that $\Omega^1_Y(\log S_Y)$ is nef on $Y$, and $\omega_Y(S_Y)$ is ample with respect to $Y^0$. This follows from our transversality assumptions. 

In the assertions (a) and (b) of the theorem one may replace the assumption on the smoothing of the cycle $\sum_i a_i C_i$ of special curves 
$C_i \subset Y$ satisfying (HHP) by the following: Assume that there is a connected union $C_1 \cup \cdots \cup C_l$ of special 
curves satisfying (HHP) and such that the image of 
$$
\pi_1(\bigcup C_i^0,*) \longrightarrow \pi_1(Y^0,*)
$$ 
has finite index for some basepoint $*$.
\end{remark}

\begin{proof} $Y^0$ is contained in $M^0$, which is a Shimura variety for $SO(2,n)$ without compact factors. All Hermitian type 
subgroups of $SO(2,n)$ except $G=\SL_2 \times \SL_2$ and quaternionic versions (see main theorem in \cite{sz}) are $\Q$-simple for rank reasons 
and either orthogonal or unitary. Hence $H(Y^0)$ will be $\Q$-simple unless $\dim(Y)=2$. In that case it follows that $Y^0$ is uniformized by 
a product $\H \times \H$ of upper half planes. For the rest of the proof we may therefore assume that $H(Y^0)$ 
is $\Q$-simple and $\dim(Y) \ge 3$. \\
(a) By Prop. \ref{3.2} (a) the real subspace $W_{y_0\in Y,\R}$ is $\rho^{\otimes 2}(\pi_1(Y^0, y_0))$-invariant.
Hence $W_{y_0\in Y,\R}\otimes \C$ defines a unitary subsystem $\U$ of the local system $\V'_{Y,\C}$ underlying the Higgs bundle $E$ (see introduction) 
and a corresponding decomposition of Higgs bundles
$$
(E^{2,0}_{Y}\oplus E^{1,1'}_{Y}\oplus E^{0,2}_{Y},\theta_{Y})\oplus (E^{1,1''}_{Y},0).
$$
Note that $\theta_Y: E^{1,1}_{Y} \to E^{0,2}_{Y} \otimes \Omega^1_Y(\log S_Y)$ is surjective, since the pair $(Y,S_Y)$ is transversely 
embedded in $(\overline{A_g},S)$. Therefore we have $\rk E^{1,1'}_{Y}=\dim Y$, and 
$$
\rk E^{1,1''}_{Y}=\dim  W_{y_0\in Y,\R}\otimes \C=\dim W_{y_0\in Y}=\dim M-\dim Y.
$$
This implies that 
$$
\theta_{Y}: T_Y(-\log S_Y)\otimes E^{2,0}_Y\to E^{1,1'}_Y
$$ 
is an isomorphism. Hence the image of $Y^0$ in $A_g$ is a locally symmetric quotient of the period domain $D$ of orthogonal type associated to 
the complement $\U^\perp$ of $\U$ in $\V'$. As a consequence, $Y^0\hookrightarrow M^0$ is a totally geodesic embedding. 
Together with the rigidity of $Y^0\subset M^0$, which follows from $\dim(Y) \ge 2$ \cite[Lemma 1.5]{mvz08}, 
we obtain that $Y^0\subset M^0$ is a special subvariety of orthogonal type by the arguments in loc. cit.. \\
(b) We will give two proofs. First Proof: The non-vanishing of the Griffiths-Yukawa coupling along $C_i$ implies that 
$$
\V'_{C_i}=\V''\oplus \U^{1,1},
$$
where $\V''$ is a sub-VHS with rank one Hodge bundles 
$$
E^{2,0}_{C_i}\oplus E^{1,1'}_{C_i}\oplus E^{0,2}_{C_i}:=E^{2,0}_{C_i}\oplus \theta(E^{2,0}_{C_i}) \oplus \theta^2(E^{2,0}_{C_i}),
$$ 
and $U^{1,1}_{C_i}$ 
is a sub-VHS of pure Hodge type $(1,1)$. Hence $E^{1,1}_{C_i}$ is polystable of slope zero. Fix a base point $y_0\in C^0_1$. Then by 
Proposition \ref{3.2} (b) the subspace $W_{y_0\in Y}\subset \V'_{\C,y_0}$ is $\rho^{\otimes 2}(\pi_1(Y^0, y_0))$-invariant. 
Hence the Higgs bundle $E$ associated to $\V'_{Y^0}$ decomposes as 
$$ 
(E^{2,0}_{Y}\oplus E^{1,1'}_{Y}\oplus E^{0,2}_{Y},\theta_{Y})\oplus (E^{1,1''}_{Y},0),
$$
where the Higgs subbundle $(E^{1,1''}_{Y},0)$ corresponds to the unitary subsystem of rank equal to $\dim M-\dim Y$ defined above.
This implies that 
$$
\theta_{Y}: T_Y(-\log S_Y)\otimes E^{2,0}_Y\to E^{1,1'}_Y
$$ 
is an isomorphism. As in (a), $Y^0\subset M^0$ is a totally geodesic embedding and  
the rigidity of $Y^0\subset M^0$ implies that $Y^0\subset M^0$ is a special subvariety of orthogonal type.\\
Second proof for (b): Let $(F_Y,\theta_Y)\subset (E^{2,0}_Y\oplus E^{1,1}_Y\oplus E^{0,2}_Y,\theta_Y)$ denote the unique saturated Higgs subsheaf generated
by $E^{2,0}_Y$ and $\theta_Y.$ Then $F_Y^{2,0}=E^{2,0}_Y$ and $ F^{0,2}_Y=E^{0,2}_Y,$ since the Griffiths-Yukawa coupling does not vanish.
The non-vanishing of the Griffiths-Yukawa coupling along $C_i$ implies that 
$$
\V'_{C_i}=\V''\oplus \U^{1,1},
$$
where $\V''$ is a sub-VHS with rank one Hodge bundles
$$
E^{2,0}_{C_i}\oplus E^{1,1'}_{C_i}\oplus E^{2,0}_{C_i}
$$
as above and $U^{1,1}_{C_i}$ is a sub-VHS of pure Hodge type $(1,1)$. Using condition (HHP) for $C_i\subset Y$ we see that 
$$
\begin{array}{rcl}
F^{1,1}_Y|_{C_i} & = & \theta_{C_i}(T_{C_i}(-\log S_{C_i})\otimes E^{2,0}_{C_i})\oplus \theta_{N_{C_i/Y}}(N_{C_i/Y}\otimes E^{2,0}_{C_i}) \\
& = & E^{1,1'}_{C_i}\oplus \theta_{N_{C_i/Y}}(N_{C_i/Y}\otimes E^{2,0}_{C_i}),
\end{array}
$$
where $\theta_{N_{C_i/Y}}(N_{C_i/Y}\otimes E^{2,0}_{C_i})$ is a direct factor of $U^{1,1}_{C_i}$.  In particular
$\det(F^{1,1}_Y)\cdot C_i=0.$  Hence, $\det(F^{1,1}_Y) \cdot C_z=0,$ where $C_z$  is a smooth curve in the family 
$\cup_{z\in Z}C_z$ and meets $S_Y$ transversely.
Note that $\deg F_{C_z}=0$, and by Simpson's polystability for the logarithmic Higgs subsheaf 
$F_{C_z},\theta_{C_z}\subset( E^{2,0}_{C_z}\oplus E^{1,1}_{C_z}\oplus E^{0,2}_{C_z},\theta_{C_z})$ 
we obtain a corresponding sub-VHS $\mathbb V^{'''}_{C_z}\subset \mathbb V'_{C_z}$.  Since 
$H(Y^0)$ is $\Q$-simple, the same argument as in the first proof of (b) shows that $\mathbb V^{'''}_{C_z}$ 
extends to a sub-VHS over $Y^0,$ which uniformizes $Y^0$ as a special subvariety of orthogonal type.\\
(c) The vanishing of the Griffiths-Yukawa coupling on $Y$ implies that the Higgs subsheaf generated by $E^{2,0}_Y$ and $\theta_Y$ has the form
$$
(F_Y,\theta_Y)=(E^{2,0}_Y\oplus \theta_Y(T_Y(-\log S_Y)\otimes E^{2,0}_Y),\theta_Y).
$$
Therefore one has 
$$ 
\theta_Y(T_Y(-\log S_Y)\otimes E^{2,0}_Y)\otimes\sO_{C_i}=\theta_{C_i}(T_{C_i}(-\log S_{C_i})\otimes E^{2,0}_{C_i})\oplus \theta_{N_{C_i/Y}}(N_{C_i/Y}\otimes E^{2,0}_{C_i}).
$$
Note that in this case
$$
E^{2,0}_{C_i}=\sL,\quad E^{1,1}_{C_i}=\sL^{-1}\oplus \sL\oplus \sU^{1,1},
$$
where $\sU^{1,1}$ is polystable of degree zero and such that 
$$
\theta_{C_i}: T_{C_i}(-\log S_{C_i})\otimes\sL {\buildrel \simeq \over \longrightarrow} 
\sL^{-1}, \textrm{ with } \sL^{-1}=\theta_{C_i}(T_{C_i}(-\log S_{C_i})\otimes E^{2,0}_{C_i}).
$$
The condition (HHP) for all $C_i$ just means that
$\theta_{N_{C_i/Y}}(N_{C_i/Y}\otimes E^{2,0}_{C_i})$ is a direct factor of $\sU^{1,1}$, hence it has degree zero, too.
That implies that $\det F_Y\cdot C_i=0$. By the same argument as in (b) we obtain that 
$(F_Y,\theta_Y)$ corresponds to a sub-VHS over $Y^0$, which uniformizes $Y^0$ as a special subvariety of unitary type.
\end{proof}


\begin{thebibliography}{XXX}
\bibitem{amrt} {\sc A. Ash, D. Mumford, M. Rapoport and Y.-S. Tai:} \textit{Smooth compactifications of locally symmetric varieties}, 
second edition, Cambridge (2010).
\bibitem{atiyah} {\sc M. Atiyah:} \textit{On the Krull-Schmidt theorem with application to sheaves},
Bull. Soc. Math. France, Vol. 84, 307-317 (1956). 
\bibitem{bhh87} {\sc G. Barthel, F. Hirzebruch and T. H\"ofer:} \textit{Geradenkonfigurationen und Algebraische Fl\"achen},
Aspects of Math. D4, Friedr. Vieweg \& Sohn, Braunschweig (1987).
\bibitem{borcherds} {\sc R. Borcherds:} \textit{Automorphic forms with singularities on Grassmannians}, Invent. math. 132, 491-562 (1998).
\bibitem{D72} {\sc P. Deligne:}  \textit{La conjecture de Weil pour les surfaces K3,} Inventiones Math., Vol. 15, 206-226 (1971). 
\bibitem{deligne-corvallis}  {\sc P. Deligne:}  \textit{Vari\'et\'es de Shimura: interpr\'etation modulaire, et techniques de 
construction de mod\`eles canoniques}, Proceedings of Symposia in Pure Mathematics Vol. 33, part 2, 247-290 (1979).
\bibitem{faltings} {\sc G. Faltings:} \textit{Arakelov's theorem on abelian varieties}, Inventiones Math., Vol. 73, 337-347 (1983). 
\bibitem{hi73} {\sc F. Hirzebruch:}  \textit{Hilbert modular surfaces}, L'Ens. Math. 19, 57-113 (1973).
\bibitem{hoermann} {\sc F. H\"ormann:} \textit{The arithmetic volume of Shimura varieties of orthogonal type}, dissertation, University of Berlin (2010).
\bibitem{ky} {\sc B. Klingler and A. Yafaev:} \textit{The Andr\'e-Oort conjecture}, preprint (2006).
\bibitem{kottwitz} {\sc R. Kottwitz:} \textit{Points on some Shimura varieties over finite fields}, Journal of the American Mathematical Society, vol.5, No.2, 373-444 (1992).
\bibitem{kudla} {\sc S. Kudla:} \textit{Arithmetic cycles on Shimura varieties of orthogonal type}, Duke Mathematical Journal, Vol.86, No.1, 39-78 (1997).
\bibitem{KS67} {\sc M. Kuga  and I Satake:}  \textit{  Abelian varieties attached to polarized K3-surfaces,}  Math. Ann. Vol. 169, 239-242 (1967).
\bibitem{milne} {\sc J. Milne:} \textit{Introduction to Shimura varieties}, in: Harmonic analysis, the trace formula, and Shimura varieties, AMS (2005).
\bibitem{mvz07} {\sc M. M\"oller, E.Viehweg and K. Zuo:} \textit{Stability of Hodge bundles and a numerical characterization of Shimura varieties}, 
see arXiv:0706.3462 (2007).
\bibitem{mvz06} {\sc S. M\"oller, E.Viehweg and K. Zuo:} \textit{Special families of curves, of Abelian varieties, 
and of certain minimal manifolds over curves}, Global Aspects of Complex Geometry. Springer Verlag, 417-450 (2006). 
\bibitem{moonen} {\sc B. Moonen:} \textit{Linearity properties of Shimura varieties I}, Journal of Alg. Geom., Vol. 7, 539-567 (1998). 
\bibitem{moonen-survey} {\sc B.Moonen} \textit{Models of Shimura varieties in mixed characteristics}, in: Galois representations in arithmetic 
algebraic geometry, LMS Lecture Notes Series 254, Cambridge University Press, 267-350 (1998).
\bibitem{mvz08} {\sc S. M\"uller-Stach, E.Viehweg and K. Zuo:} \textit{Relative Proportionality for subvarieties of moduli spaces of 
K3 and abelian surfaces}, Pure and Applied Mathematics Quarterly, Vol. 5, Nr. 3, 1161-1199 (2009). 
\bibitem{mum} {\sc D. Mumford:} \textit{Hirzebruch's proportionality theorem in the non-compact case}, Inventiones Math., Vol. 42, 239-277 (1977).
\bibitem{sz} {\sc M.-H. Saito and S. Zucker:} \textit{Classification of non-rigid families of K3 surfaces and a finiteness theorem of Arakelov type},
Math. Annalen, Vol. 289, 1-31 (1991).
\bibitem{shih}{\sc K. Shih:} \textit{Existence of certain canonical models}, Duke Mathematical Journal, Vol.45, No.1, 63-66 (1978). 
\bibitem{simpson} {\sc C. Simpson:} \textit{Harmonic bundles on non-compact curves}, Journal of the AMS, Vol. 3, 713-770 (1990).
\bibitem{simpson1} {\sc C. Simpson:} \textit{Higgs bundles and local systems}, Publ. Math. IHES vol. 75, 5-95 (1992).
\bibitem{vz04} {\sc E. Viehweg and K. Zuo:} \textit{A characterization of certain Shimura curves in the moduli 
stack of abelian varieties}, J. Differential Geometry, Vol. 66, 233-287 (2004).
\bibitem{zuo99} {\sc K. Zuo:} \textit{Representations of fundamental groups of algebraic varieties,}
Lecture Notes in Mathematics 1708 Springer (1999).
\end{thebibliography}
\end{document}